\numberwithin{equation}{section}
\newtheorem{thm}{Theorem}[section]
\newtheorem{lem}[thm]{Lemma}
\theoremstyle{definition}
\newtheorem{defn}[thm]{Definition}
\newtheorem{example}[thm]{Example}
\theoremstyle{remark}
\numberwithin{equation}{section}
\def\ie{\emph{i.e.}}
\def\fall{\mbox{for all }}
\def\feac{\mbox{for each }}
\def\spn{\mathrm{span}}
\def\argmin{\mathrm{argmin}}
\def\fract{\mathrm{frac}}
\def\Or{{\cal O}}
\def\ms{\mathrm{ms}}
\def\Om{\Omega}
\def\OmT{\Om_{T}}
\def\pOm{\p\Om}
\def\GD{\Gamma_{D}}
\def\GN{\Gamma_{N}}
\def\oi{\omega_{i}}
\def\Ki{K_{i}}
\def\Kim{K_{i,m}}
\def\lij{\lambda^{(i)}_{j}}
\def\x{\mathrm{x}}
\def\p{\partial}
\def\pt{\p_{t}}
\def\dive{\nabla\cdot}
\def\gd{\nabla}
\def\k{\kappa}
\def\wk{\widetilde{\k}}
\def\fkm{\frac{\k}{\mu}}
\def\tfkm{\tfrac{\k}{\mu}}
\def\R{\mathbb{R}}
\def\Rd{\R^{d}}
\def\L{\mathrm{L}}
\def\H{\mathrm{H}}
\def\V{\mathrm{V}}
\def\Lt{\L^{2}}
\def\Ho{\H^{1}}
\def\Hoz{\H^{1}_{0}}
\def\Vz{\V_{0}}
\def\nV{\widetilde{\V}}
\def\Vaux{\V_{\mathrm{aux}}}
\def\Viaux{\V_{\mathrm{aux}}^{(i)}}
\def\Vms{\V^{\ms}}
\def\Vh{\V^{h}}
\def\Th{{\cal T}^{h}}
\def\TH{{\cal T}^{H}}
\def\coar{\tiny{\rm{coarse}}}
\def\fine{\tiny{\rm{fine}}}
\def\tme{\tiny{\rm{time}}}
\def\Nc{N_{\coar}} 
\def\Nf{N_{\fine}} 
\def\Nt{N_{\tme}} 
\def\dtn{\Delta_{n}}
\def\pn{p_{n}}
\def\pnm{p_{n-1}}
\def\ph{p^{h}}
\def\pp{\hat{p}}
\def\phn{\ph_{n}}
\def\pnk{p^{k}_{n}}
\def\pnko{p^{k+1}_{n}}
\def\phnm{\ph_{n-1}}
\def\phnk{p^{h,k}_{n}}
\def\pink{p^{i,k}_{n}}
\def\pinm{p^{i}_{n-1}}
\def\pms{p^{\ms}}
\def\pmsn{\pms_{n}}
\def\pmsnm{\pms_{n-1}}
\def\pw{\hat{w}}
\def\rref{\rho_{\mathrm{ref}}}
\def\pref{p_{\mathrm{ref}}}
\def\vij{\varphi^{(i)}_{j}}
\def\psijms{\psi_{j,\ms}^{(i)}}
\def\wrni{\widetilde{r}_{n}^{(i)}}
\def\rni{r_{n}^{(i)}}
\def\Rms{r^{\ms}}
\def\Rmsn{\Rms_{n}}
\def\Fjnk{F_{j}^{n,k}}
\def\Fnk{{\mathbf F}^{n,k}}
\def\Jjink{J_{ji}^{n,k}}
\def\Jnk{{\mathbf J}^{n,k}}
\def\dpnk{\boldsymbol{\delta}_{p^{n,k}}}
\providecommand{\keywords}[1]{\small\textbf{Keywords:} #1}
\title{Convergence of the CEM-GMsFEM for compressible flow in highly heterogeneous media}
\author{Leonardo A. Poveda\thanks{Department of Mathematics, The Chinese University of Hong Kong, Shatin, Hong Kong SAR, China} \and Shubin Fu\thanks{Eastern Institute for Advanced Study, Ningbo, China} \and Eric T. Chung\thanks{Department of Mathematics, The Chinese University of Hong Kong, Shatin, Hong Kong SAR, China} \and Lina Zhao\thanks{Department of Mathematics, City University of Hong Kong, Hong Kong SAR, China}}
\begin{document}
\maketitle
\begin{abstract}
This paper presents and analyses a Constraint Energy Minimization Generalized Multiscale Finite Element Method (CEM-GMsFEM) for solving single-phase non-linear compressible flows in highly heterogeneous media. 
The construction of CEM-GMsFEM hinges on two crucial steps: First, the auxiliary space is constructed by solving local spectral problems, where the basis functions corresponding to small eigenvalues are captured. Then 
the basis functions are obtained by solving local energy minimization problems over the oversampling domains using the auxiliary space.   The basis functions have exponential decay outside the corresponding local oversampling regions.  The convergence of the proposed method is provided, and we show that this convergence only depends on the coarse grid size and is independent of the heterogeneities. An online enrichment guided by \emph{a posteriori} error estimator is developed to enhance computational efficiency. Several numerical experiments on a three-dimensional case to confirm the theoretical findings are presented, illustrating the performance of the method and giving efficient and accurate numerical
\end{abstract}

\keywords{Constraint energy minimization, multiscale finite element methods,  compressible flow, highly heterogeneous, local spectral problems}

\section{Introduction}
\label{sec:introduction}
The numerical solution of non-linear partial differential equations defined on domains with multiscale and heterogeneous properties is an active research subject in the scientific community. The subject is related to several engineering applications, such as composite materials, porous media flow, and fluid mechanics. A common feature for all these applications is that they are very computationally challenging and often impossible to solve within an acceptable tolerance using standard fine-scale approximations due to the disparity between scales that need to be represented and the inherent nonlinearities. For this reason, coarse-grid computational models are often used. These approaches are usually referred to as multiscale methods in the literature, among which we may mention: Multiscale Finite Element Method  \cite{hou1997multiscale}, the Variational Multiscale Method \cite{hughes1998variational}, Mixed Multiscale Finite Element Method \cite{chen2003mixed}, Mixed Mortar Multiscale Finite Element Method \cite{Arbogast07}, the two-scale Finite Element Method \cite{matache2000homogenization}, and the Multiscale Finite Volume method \cite{jenny2003multi,jenny2004adaptive}. The aforementioned methods share model reduction techniques, using different structures to find multiscale solutions, especially in many practical applications such as fluid flow simulations, for instance, \cite{alotaibi2022generalized,chen2020generalized,park2005mixed,kim2007multiscale,hajibeygi2009multiscale,wang2014algebraic,fu2022generalized,vasilyeva2019nonlocal}. In particular, we consider a family of an extended version of MsFEM, known as the Generalized Multiscale Finite Element Method (GMsFEM) that was first introduced by \cite{efendiev2013generalized,efendiev2014generalized,chung2014generalized,chung2014adaptive}. The main idea of GMsFEM is to construct localized basis functions by solving local spectral problems that are used to approximate the solution on a coarse grid incorporating fine-scale features. Following this, we construct an auxiliary space associated with local spectral problems in the coarse grid. The first few eigenfunctions corresponding to small eigenvalues (the convergence depends on the decay of the spectral problems \cite{efendiev2013generalized}) are considered as the multiscale basis functions.  In this paper, we extend the Constraint Energy Minimization Generalized Multiscale Finite Element Method (CEM-GMsFEM) developed in \cite{chung2018constraint,li2019constraint} to solve single-phase nonlinear compressible flow. The key ideas of the method can be summarized as follows: First, we construct the auxiliary basis functions by solving local spectral problems. Then, by using oversampling techniques and localization (cf. \cite{malqvist2014localization}), we solve an appropriate energy subject to some constrainted oversampling regions to find the required basis functions. Finally, the resulting basis functions are shown to have exponential decay away from the target coarse element, and therefore, they are localizable. 
Then we rigorously analyze the convergence error estimates for the proposed scheme. 
Our theories indicate that  the convergence rate behaves as $H/\Lambda$, where $H$ denotes the coarse-grid size and $\Lambda$ is proportional to the minimum (taken over all coarse regions) of the eigenvalues that the corresponding eigenvectors are not included in the coarse space. Since the problems under consideration are nonlinear, some novel methodologies shall be incorporated to overcome the difficulties present in the analysis. Several numerical experiments are carried out to demonstrate the capabilities and efficiency of the proposed method.

The outline of the paper is following. In Section \ref{sec:mathematical}, we briefly derive a mathematical model for compressible fluid flows in porous media. Section \ref{sec:construction} is devoted to constructing the offline multiscale space and framework of CEM-GMsFEM. Section \ref{sec:convergence} presents our  convergence analysis for the proposed method. Numerical experiments are presented in Section \ref{sec:numerical}. Finally, concluding remarks and future perspectives are given in Section \ref{sec:conclusion}.

\section{Mathematical model for compressible fluid flow}
\label{sec:mathematical}
In this section, we consider the governing equations of the single-phase, non-linear compressible fluid flow processes in a porous medium that are defined by
\begin{equation}
\label{eq:main-prob}
\left\{\begin{split}
\pt(\phi\rho)-\dive\left(\fkm\rho\gd p \right)&=q,\quad\mbox{in }\OmT:=\Om\times(0,T],\quad T>0,\\
\fkm\rho\gd p\cdot n &= 0,\quad\mbox{on }\GN\times(0,T],\\
p & = p^{D},\quad\mbox{on }\GD\times(0,T],\\
p & = p_{0},\quad\mbox{on }\Om\times\{t=0\}.
\end{split}\right.
\end{equation}
For simplicity of presentation, let $\Om\subset \Rd$ be the computational domain with a boundary defined by $\pOm=\GD\cup\GN$. We will henceforth neglect the gravity effects and capillary forces and assume that $\phi$, the porosity of the medium, is assumed to be a constant. We aim to seek the fluid pressure $p$, $\k$ denotes the permeability field that may be highly heterogeneous, such that $\k_{0}\leq\k\leq\k_{1}$, where $0<\k_{0}<\k_{1}<\infty$; and $\mu$ is the constant fluid viscosity. The fluid density $\rho$ is a function of the fluid pressure $p$ defined as
\begin{equation}
\rho(p)=\rref e^{c(p-\pref)},
\end{equation}
where $\rref$ is the given reference density and $\pref$ is the reference pressure. Finally, $n$ denotes the outward unit-normal vector on $\pOm$. 

For a sub-domain $D\subset\Om$, let  $\V:=\Ho(D)$ be the standard Sobolev spaces endowed with the norm $\|\cdot\|_{1,D}$. We further denote by $(\cdot,\cdot)$ and $\|\cdot\|_{0,D}$ the inner product and norm, respectively in $\L^{2}(D)$. The subscript $D$ will be omitted whenever $D=\Om$. In addition, we use the space $\Vz:=\Hoz(D)$, which is a subspace of $\V$ made of functions that vanish at $\GD$. Finally, let $\L^{2}(0,T;\L^{2}(D))$ denote the set of functions with norm
\[
\|v\|_{\L^{2}(0,T;\L^{2}(D))}=\left(\int_{0}^{T}\|v(\cdot,t)\|^{2}_{0,D}dt\right)^{1/2}.
\]
Throughout the paper, $a\preceq b$ means there exists a positive constant $C$ independent of the mesh size such that $a\leq Cb$.

\subsection{A finite element approximation}
In this subsection, we introduce the notions of fine and coarse grids to discretize the problem \eqref{eq:main-prob}. Let $\TH$ be a usual conforming partition of the computational domain $\Om$ into coarse block $K\in\TH$ with diameter $H$. Then, we denote this partition as the coarse grid and assume that each coarse element is partitioned into a connected union of fine-grid blocks. In this case, the fine grid partition will be denoted by $\Th$, and is, by definition, a refinement of the coarse grid $\TH$, such that $h\ll H$. We shall denote $\{x_{i}\}_{i=1}^{\Nc}$ as the vertices of the coarse grid $\TH$, where $\Nc$ denotes the number of coarse nodes. We define the neighborhood of the node $x_{i}$ by 
\[
\oi=\bigcup\{K_{j}\in\TH:x_{i}\in\overline{K}_{j}\}.
\]
In addition, for CEM-GMsFEM considered in this paper, we have that given a coarse block $\Ki$, we represent the oversampling region $\Kim\subset\Om$ obtained by enlarging $\Ki$ with $m\geq1$ coarse grid layers, see Fig.~\ref{fig:grid}.
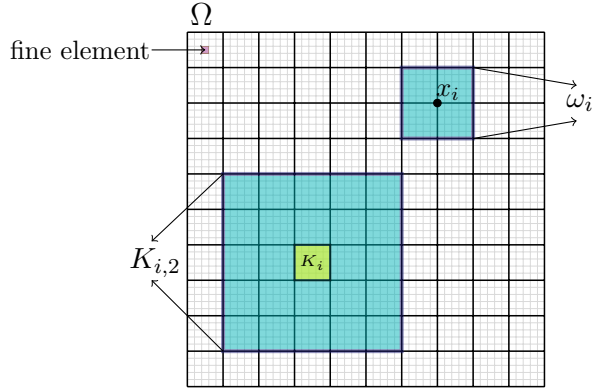
\begin{figure}[h!]
\centering
\begin{tikzpicture}[scale=4.7]
\draw[step=0.02cm,color=Gray, line width = 0.01mm,opacity=0.3] (0,0) grid (1,1);
\draw[step=0.1cm,color=Black,line width = 0.2mm] (0,0) grid (1,1);
\draw [draw=MidnightBlue, line width=1.5pt, fill=BlueGreen, opacity=0.50] (0.1,0.1) rectangle (0.6,0.6);
\draw [draw=Sepia, line width=0.02mm, fill=Mulberry, opacity=0.50] (0.04,0.94) rectangle (0.06,0.96);
\draw [draw=MidnightBlue, line width=1.5pt, fill=BlueGreen, opacity=0.50] (0.6,0.7) rectangle (0.8,0.9);
\node at (-0.09,0.35) {$K_{i,2}$};
\draw [draw=black, line width=0.3mm, fill=Yellow, opacity=0.5] (0.3,0.3) rectangle (0.4,0.4);
\node at (0.35,0.35) {\tiny{$\Ki$}};
\node at (-0.3,0.95) {\small{\mbox{fine element}}};
\filldraw (0.7,0.8) circle (0.3pt);
\node at (0.73,0.83) {\small{$x_{i}$}};
\node at (1.10,0.8) {$\oi$};
\node at (0.04,1.05) {\large{$\Om$}};
\coordinate (A) at (-0.1,0.3);
\coordinate (B) at (-0.1,0.41);
\coordinate (C) at (0.1,0.1);
\coordinate (D) at (0.1,0.6);
\coordinate (E) at (0.05,0.95);
\coordinate (F) at (-0.1,0.95);

\coordinate (G) at (1.09,0.75);
\coordinate (H) at (1.09,0.85);
\coordinate (I) at (0.8,0.7);
\coordinate (J) at (0.8,0.9);

\draw [<-] (A) -- (C);
\draw [<-] (B) -- (D);
\draw [<-] (E) -- (F);
\draw [<-] (G) -- (I);
\draw [<-] (H) -- (J);
\end{tikzpicture}
\caption{Illustration of the $2$D multiscale grid with a typical coarse element $\Ki$ and oversampling domain $K_{i,2}$, the fine grid element and neighborhood $\oi$ of the node $x_{i}$.}\label{fig:grid}
\end{figure}
We consider the linear finite element space $\Vh$ associated with the grid $\Th$, where the basis functions in this space are the standard Lagrange basis functions defined as $\{\eta^{i}\}_{i=1}^{\Nf}$. Then, the semi-discrete finite element approximation to \eqref{eq:main-prob} on the fine grid is to find $\ph\in\Vh$ such that
\begin{equation}
\label{eq:weak-prob}
(\phi\pt\rho(\ph),v)+\left(\tfkm\rho(\ph)\gd\ph,\gd v\right)=(q,v),\quad\feac v\in\Vh.
\end{equation}
We can now define the fully-discrete scheme for the discrete formulation \eqref{eq:weak-prob}. Let $0=t_{0}<t_{1}<\cdots<t_{\Nt-1}<t_{\Nt}=T$ be a partition of the interval $[0,T]$, with time-step size given by $\dtn=t_{n}-t_{n-1}$,  for $n=1,\dots,\Nt$, where $\Nt$ is an integer. The backward Euler time integration leads to finding $\phn$ such that
\begin{equation}
\label{eq:fweak-prob}
(\phi\rho(\phn),v)-(\phi\rho(\phnm),v)+\dtn\left(\tfkm\rho(\phn)\gd\phn,\gd v\right)=\dtn(q,v),\quad\feac v\in\Vh.
\end{equation}
Linearization of \eqref{eq:fweak-prob} via Newton-Raphson iteration yields a iterative linear matrix problem, 
\[
\Jnk\dpnk=-\Fnk,
\]
where $\Jnk:=[\Jjink]_{i,j=1}^{\Nf}$ denotes the Jacobi matrix, with entries 
\[
\Jjink:= (\phi\rho(\phnk)\eta_{i},\eta_{j})+\dtn\left(\fkm\rho(\phnk)\gd\eta_{i},\gd\eta_{j}\right)+\dtn\left(c\fkm\eta_{i}\rho(\phnk)\sum_{i}\pink\gd\eta_{i},\gd\eta_{j}\right),
\]
$\Fnk:=[\Fjnk]_{j=1}^{\Nf}$ is the residual with entries 
\begin{align*}
\Fjnk&=\left(\phi\rho\left(\sum_{i=1}^{\Nf}\pink\eta_{i}\right),\eta_{j}\right)-\left(\phi\rho\left(\sum_{i=1}^{\Nf}\pinm\eta_{i}\right),\eta_{j}\right)
+\dtn\left(\fkm\rho\left(\sum_{i=1}^{\Nf}\pink\eta_{i}\right)\sum_{i=1}^{\Nf}\pink\gd\eta_{i},\gd\eta_{j}\right)\\
&\quad-\dtn(q,\eta_{j})=0,
\end{align*}
and $\pnko=\pnk+\dpnk$, where $\phnk=\sum_{i}\pink\eta_{i}$ and $\phnm=\sum_{i}\pinm\eta_{i}$, with $k$ and $k-1$ the new and old Newton iteration. Here $\{\eta^{i}\}_{i=1}^{\Nf}$ represents the finite element basis functions for $\Vh$.

\section{Construction of CEM-GMsFEM basis functions}
\label{sec:construction}

This section will describe the construction of CEM-GMsFEM basis functions using the framework of \cite{chung2018constraint} and \cite{li2019constraint}. This procedure can be divided into two stages. The first stage involves constructing the auxiliary spaces by solving a local spectral problem in each coarse element $K$, see \cite{efendiev2013generalized}. The second stage is to provide the multiscale basis functions by solving some local constraint energy minimization problems in oversampling regions. 

\subsection{Auxiliary basis function}
In this subsection, we present the construction of the auxiliary multiscale basis functions by solving the local eigenvalue problem for each coarse element $K_{i}$. We consider $\V(\Ki):=\V\scriptstyle\big|_{\Ki}$ the restriction of the space $\V$ to the coarse element $\Ki$. We solve the following local eigenvalue problem: find $\{\lij,\vij\}$ such that 
\begin{equation}
\label{eq:lspec-prob}
a_{i}(\vij,w)=\lij s_{i}(\vij,w),\quad\feac w\in\V(K_{i}),
\end{equation}
where 
\[
a_{i}(v,w):=\int_{\Ki}\k\rho(p_{0})\gd v\cdot\gd wd\x, \quad s_{i}(v,w):=\int_{K_{i}}\widetilde{\k}vwd\x.
\]
Here $\wk=\rho(p_{0})\k\sum_{i=1}^{\Nc}|\gd\chi_{i}|^{2}$, where $\Nc$ is the total number of neighborhoods, $p_{0}$ is the pressure $p$ at the initial time and $\{\chi_{i}\}$ is a set of partition of unity functions for the coarse grid $\TH$, see \cite{babuska1997partition}. The problem  defined above is solved on the fine grid in the actual computation. We assume that the eigenfunctions satisfy the normalized condition $s_{i}(\vij,\vij)=1$. We let $\lij$ be the eigenvalues of \eqref{eq:lspec-prob} arranged in ascending order. We shall use the first $L_{i}$ eigenfunctions to construct the local auxiliary multiscale space $\Viaux:=\{\vij:1\leq j\leq L_{i}\}$. We can define the global auxiliary multiscale space as  $\Vaux:=\bigoplus_{i=1}^{\Nc}\Viaux$.

For the local auxiliary space $\Viaux$, the bilinear form $s_{i}$ given above defines an inner product with norm $\|v\|_{s(\Ki)}=s_{i}(v,v)^{1/2}$. Then, we can define the inner product and norm for the global auxiliary multiscale space $\Vaux$, which are defined by
\[
s(v,w)=\sum_{i=1}^{\Nc}s_{i}(v,w),\quad \|v\|_{s}:=s(v,v)^{1/2},\quad \feac v,w\in\Vaux. 
\]
To construct the CEM-GMsFEM basis functions, we use the following definition.
\begin{defn}[\cite{chung2018constraint}]
Given a function $\vij\in\Vaux$, if a function $\psi\in\V$ satisfies 
\[
s(\psi,\vij):=1,\quad s(\psi,\varphi_{j'}^{(i')})=0,\quad\mbox{if }j'\neq j\mbox{ or }i'\neq i,
\]
then, we say that is $\vij$-orthogonal where $s(v,w)=\sum_{i=1}^{N}s_{i}(v,w)$.
\end{defn}
Now, we define $\pi:\V\to\Vaux$ to be the projection with respect to the inner product $s(v,w)$. So, $\pi$ is defined by
\[
\pi(v):=\sum_{i=1}^{\Nc}\pi_{i}(v)=\sum_{i=1}^{\Nc}\sum_{j=1}^{L_{i}}s_{i}(v,\vij)\vij,\quad\feac v\in\V,
\]
where $\pi_{i}:L^{2}(\Ki)\to\Viaux$ denotes the projection with respect to inner product $s_{i}(\cdot,\cdot)$. The null space of the operator $\pi$ is defined by $\nV=\{v\in\V:\pi(v)=0\}$.
Now, we will construct the multiscale basis functions. Given a coarse block $\Ki$, we denote the oversampling region $\Kim\subset\Om$ obtained by enlarging $\Ki$ with an arbitrary number of coarse grid layers $m\geq1$, see Figure \ref{fig:grid}. Let $\Vz(\Kim):=\Hoz(\Kim)$. Then, we define the multiscale basis function \begin{equation}
\label{eq:main-argmin}
\psijms=\argmin\{a(\psi,\psi):\psi\in\Vz(\Kim),\,\psi\mbox{ is $\vij$-orthogonal}\},
\end{equation}
where $\V(\Kim)$ is the restriction of $\Vz$ in $\Kim$ and $\Vz(\Kim)$ is the subspace of $\V(\Kim)$ with zero trace on $\p\Kim$. The multiscale finite element space $\Vms$ is defined by
\[
\Vms=\spn\{\psijms:1\leq j\leq L_{i}, 1\leq i\leq \Nc\}.
\]
By introducing the Lagrange multiplier, the problem \eqref{eq:main-argmin} is equivalent to the explicit form: find $\psijms\in\Vz(\Kim)$, $\lambda\in\Viaux(\Ki)$ such that
\[
\begin{cases}
a(\psijms,\eta)+s(\eta,\lambda)&=0,\quad\fall \eta\in \V(\Kim),\\
s(\psijms-\vij,\nu)&=0,\quad\fall \nu\in\Viaux(\Kim),
\end{cases}
\]
where $\Viaux(\Kim)$ is the union of all local auxiliary spaces for $\Ki\subset\Kim$. Thus, the semi-discrete multiscale approximation reads as follows: find $\pmsn\in\Vms$ such that
\begin{equation}
\label{eq:sms-prob}
(\phi\pt\rho(\pmsn),v)+\left(\tfkm\rho(\pmsn)\gd\pmsn,\gd v\right)=(q,v),\quad\feac v\in\Vms.
\end{equation}
Using the backward Euler time-stepping scheme, we have a full-discrete formulation: find $\pmsn\in\Vms$ such that
\begin{equation}
\label{eq:fms-prob}
(\phi\rho(\pmsn),v)-(\phi\rho(\pmsnm),v)+\dtn\left(\tfkm\rho(\pmsn)\gd\pmsn,\gd v\right)=\dtn(q,v),\quad\feac v\in\Vms.
\end{equation}

\section{Convergence analysis}
\label{sec:convergence}
In this section, we establish the estimates of the convergence order of the proposed method. 

\subsection{Error estimates}
In this subsection, we will present the convergence error estimates for the semi-discrete scheme \eqref{eq:sms-prob}. The analysis consists of two main steps. First, we derive the error estimate for the difference between the exact solution and its corresponding elliptic projection. Second, we estimate the difference between the solution of \eqref{eq:main-prob} and solution of \eqref{eq:sms-prob} by the difference between exact solution and the elliptic projection solution of problem \eqref{eq:main-prob}.

To begin, we let $\pp\in\Vms$ be the elliptic projection of the function $p\in\V$ that is defined by
\begin{equation}
\label{eq:dproj}
(\tfkm\rho(p_{0})\gd(p-\pp),\gd w)=0,\quad\feac w\in\Vms.
\end{equation}
The following lemma gives us the error bound of $\pp$ for the nonlinear parabolic problem.

\begin{lem}
\label{lem:main-eproj}
Let $p$ be the solution of \eqref{eq:weak-prob}. For each $t>0$, we define the elliptic projection $\pp\in\Vms$ that satisfies \eqref{eq:dproj}. Then,
\[
\|(\tfkm)^{1/2}\gd(p-\pp)(t)\|_{0}\preceq \Lambda^{-1/2}H\|(\tfkm)^{1/2}(q-\pt \rho(p))(t)\|_{0},
\]
where $\Lambda=\min_{1\leq i\leq N}\lambda^{(i)}_{L_{i}+1}$.
\end{lem}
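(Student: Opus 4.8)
The plan is to exploit the defining property of the elliptic projection, namely that the error $p-\pp$ lies in the kernel of the projection $\pi$ restricted against $\Vms$, and then invoke the spectral gap estimate that underlies the whole CEM framework.

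First I would set $e := p - \pp$ and observe, from the definition \eqref{eq:dproj}, that $(\tfkm \rho(p_0)\gd e, \gd w) = 0$ for all $w \in \Vms$; in particular $e$ is $a$-orthogonal to $\Vms$, where $a(\cdot,\cdot)$ is the bilinear form with weight $\tfkm\rho(p_0)$ (equivalently $\k\rho(p_0)$ up to the constant $\mu$). Next I would write the weak form \eqref{eq:weak-prob} at the frozen coefficient: testing \eqref{eq:weak-prob} against $v \in \Vms$ gives $(\tfkm\rho(p)\gd p, \gd v) = (q - \phi\pt\rho(p), v)$. The natural move is to compare this with the frozen-coefficient bilinear form $a(\cdot,\cdot)$; combining with the Galerkin orthogonality of $e$ one obtains, for every $v\in\Vms$,
\[
a(e, e) = a(e, p - v) = (\text{residual of } p \text{ against } a)(p-v) ,
\]
so that after choosing $v$ cleverly the error is controlled by how well $\Vms$ can approximate $p$ in the $a$-norm, with the right-hand side data $q - \pt\rho(p)$ appearing through the equation. (One must be a little careful here because \eqref{eq:weak-prob} carries the \emph{nonlinear} weight $\rho(p)$ rather than $\rho(p_0)$; I would absorb the difference $\rho(p)-\rho(p_0)$ either by working at $t=0$ where it vanishes, or by noting the statement only claims a bound in terms of $q - \pt\rho(p)$ and tracking the coefficient ratio $\rho(p)/\rho(p_0)$, which is bounded above and below by the bounds on $\k$ and the exponential form of $\rho$.)

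The heart of the argument is then the CEM interpolation/approximation bound: there is a projection-type operator onto $\Vms$ (or one uses the $\vij$-orthogonal basis directly) for which any $w\in\V$ satisfies an estimate of the form
\[
\inf_{v\in\Vms} a(w - v, w-v) \;\preceq\; \Lambda^{-1}\, \| \pi^{\perp}\text{-part of } w\|^2 \text{-type quantity},
\]
and crucially $\|v\|_s^2 \preceq \Lambda^{-1} a(v,v)$ whenever $v \in \nV$, i.e. $\pi(v)=0$; this is exactly where the eigenvalue $\Lambda = \min_i \lambda^{(i)}_{L_i+1}$ enters, since $\widetilde\k$ contains the factor $\sum|\gd\chi_i|^2 \sim H^{-2}$, converting the spectral inequality into the factor $\Lambda^{-1}H^2$. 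I would apply this with $w$ built from the right-hand side $q - \pt\rho(p)$: concretely, one shows that testing against $\pi$ of the error relates $a(e,e)$ to $s$-norms of quantities controlled by $(\tfkm)^{1/2}(q-\pt\rho(p))$, using that $e \in \nV$ (a consequence of the constraint $s(\psijms - \vij, \cdot)=0$ defining the basis, which forces $\pi$ to act trivially on the relevant differences). Taking square roots yields the claimed $\Lambda^{-1/2}H$ factor.

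The main obstacle I anticipate is handling the nonlinearity cleanly: the elliptic projection \eqref{eq:dproj} is defined with the \emph{linear} weight $\rho(p_0)$, while the PDE \eqref{eq:weak-prob} carries $\rho(p(t))$ and the time-derivative term $\phi\pt\rho(p)$, so identifying the residual that the CEM machinery consumes requires either restricting to the frozen problem at $t=0$ and propagating, or carrying the ratio $\rho(p(t))/\rho(p_0)$ through every estimate and absorbing it into the hidden constant (legitimate since $0 < c_0 \le \rho(p)/\rho(p_0) \le c_1 < \infty$ by the exponential constitutive law and, implicitly, an $L^\infty$ bound on $p$). A secondary technical point is making precise the operator that realizes $\inf_{v\in\Vms}$ with the exponential-decay basis $\psijms$ rather than the global minimizer $\psijr$; for the convergence rate stated here (which only involves $H/\sqrt\Lambda$ and not the oversampling parameter $m$) it suffices to use the global energy-minimizing space, so I would prove the bound for that space first and defer the localization error to a separate lemma.
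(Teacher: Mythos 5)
Your ingredients are the right ones and largely coincide with the paper's proof: Galerkin orthogonality of the elliptic projection with the frozen weight $\rho(p_0)$; replacement of $\rho(p_0)$ by $\rho(p)$ using the two-sided bounds on $\rho$ (this is exactly what the paper does, so your fallback of working at $t=0$ and propagating is unnecessary); insertion of the equation to bring in $q-\phi\pt\rho(p)$; the spectral inequality $\|v\|_{s}^{2}\preceq\Lambda^{-1}\|(\tfkm)^{1/2}\gd v\|_{0}^{2}$ for $v$ with $\pi(v)=0$; and the factor $H$ coming from $|\gd\chi_i|=\Or(H^{-1})$ hidden in $\wk$. Your observation that $p-\pp\in\nV$ is really a property of the ideal (global) CEM space, with localization to be handled separately, is consistent with --- indeed more careful than --- the paper, which simply asserts $p-\pp\in\nV$.

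The step that does not close as written is the middle of your argument. You propose to test \eqref{eq:weak-prob} only with $v\in\Vms$ and then run a C\'ea-type reduction $a(e,e)=a(e,p-v)$, so that the error is bounded by the best approximation of $p$ from $\Vms$ in the $a$-norm. That reduction is not what the lemma asserts and does not by itself yield a bound in terms of the data: you would still need $\inf_{v\in\Vms}\|(\tfkm)^{1/2}\gd(p-v)\|_{0}\preceq \Lambda^{-1/2}H\|\k^{-1/2}(q-\phi\pt\rho(p))\|_{0}$, which is essentially the lemma over again. The paper's (and the standard CEM) mechanism is more direct: since $p$ here is the fine-scale solution of \eqref{eq:weak-prob} and $e:=p-\pp\in\Vh$, one is allowed to test \eqref{eq:weak-prob} with $e$ itself; after the orthogonality step this gives $\|(\tfkm)^{1/2}\gd e\|_{0}^{2}\preceq(q-\phi\pt\rho(p),e)\leq\|\wk^{-1/2}(q-\phi\pt\rho(p))\|_{0}\|e\|_{s}$, and then the spectral inequality for $e\in\nV$ converts $\|e\|_{s}$ into $\Lambda^{-1/2}\|(\tfkm)^{1/2}\gd e\|_{0}$, so dividing through and using $\wk\sim\k H^{-2}$ finishes the proof. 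Your later sentence about ``testing against $\pi$ of the error'' gestures at this, but the argument should be organized around testing with $e$, not around best approximation; with that repair your outline matches the paper.
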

\begin{proof}
Let $\pp\in\Vms$ be the projection of $p$. By boundedness of $\rho$ and orthogonality property, we can write
\begin{align*}
\|(\tfkm)^{1/2}\gd(p-\pp)\|^{2}_{0}\preceq\int_{\Om}(\tfkm)\rho(p_{0})|\gd(p-\pp)|^{2}d\x&=(\tfkm\rho(p_{0})\gd(p-\pp),\gd(p-\pp))\\
&=(\tfkm\rho(p_{0})\gd p,\gd(p-\pp)).
\end{align*}
Invoking again the boundedness of $\rho$, we have
\[|
(\tfkm\rho(p_{0})\gd p,\gd(p-\pp))|\preceq |(\tfkm\rho(p)\gd p,\gd(p-\pp))|.
\]
Now, from problem \eqref{eq:weak-prob}, we get that
\[
(\tfkm\rho(p)\gd p,\gd(p-\pp))=(q-\phi\pt\rho(p),\gd(p-\pp)),\quad\fall t>0.
\]
Therefore, we arrive at
\begin{equation}
\label{eq:lemprj1}
\|(\tfkm)^{1/2}\gd(p-\pp)\|^{2}_{0}\preceq(q-\phi\pt \rho(p),p-\pp)\leq \|\wk^{-1/2}(q-\phi\pt \rho(p))\|_{0}\|p-\pp\|_{s},\quad\fall t>0.
\end{equation}
Since $p-\pp\in\nV$, implies that $\pi(p-\pp)=0$. According to \cite{chung2018constraint}, the coarse blocks $\Ki$ with $i=1,\dots,\Nc$ are disjoint, so we obtain that $\pi_{i}(p-\pp)=0$, for all $i=1,2,\dots,\Nc$. Thus,  the local spectral problem \eqref{eq:lspec-prob} yields that
\begin{equation}
\label{eq:lemprj2}
\|p-\pp\|^{2}_{s}=\sum_{i=1}^{\Nc}\|p-\pp\|^{2}_{s_{i}}=\sum_{i=1}^{\Nc}\|(I-\pi_{i})(p-\pp)\|^{2}_{s_{i}}\preceq \frac{1}{\Lambda}\sum_{i=1}^{\Nc}\|(\tfkm)^{1/2}\gd(p-\pp)\|^{2}_{0,\Ki},
\end{equation}
where $\Lambda=\min_{1\leq i\leq N}\lambda^{(i)}_{L_{i}+1}$. Therefore, by combining \eqref{eq:lemprj1} and \eqref{eq:lemprj2}, and using the fact $|\gd\chi_{i}|=\Or(H^{-1})$, we obtain
\[
\|(\tfkm)^{1/2}\gd (p-\pp)\|_{0}\preceq \Lambda^{-1/2}H\|\k^{-1/2}(q-\phi\pt \rho(p))\|_{0}.
\]
This completes the proof.
\end{proof}

The above estimate is the essence of the following result.

\begin{lem}
\label{lem:cor}
Under Assumptions of Lemma \ref{lem:main-eproj}, the following estimates hold
\begin{align*}
\|p-\pp\|_{0}&\preceq \Lambda^{-1}H^{2}\|\k^{-1/2}(q-\phi\pt\rho(p))\|_{0},\\
\|\pt(p-\pp)\|_{0}&\preceq \Lambda^{-1}H^{2}\|\k^{-1/2}\pt(q-\phi\pt\rho(p))\|_{0}.
\end{align*}
\end{lem}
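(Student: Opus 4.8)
The plan is to use a standard Aubin--Nitsche duality argument, piggybacking on the $H^1$-type estimate already established in Lemma \ref{lem:main-eproj}. Write $e := p - \pp$ and recall from the definition \eqref{eq:dproj} that $e$ is $\tfkm\rho(p_0)$-orthogonal to $\Vms$, and that $e \in \nV$ so $\pi(e)=0$. For the $\L^2$ estimate I would introduce the auxiliary problem: find $z \in \V$ with $(\tfkm\rho(p_0)\gd z, \gd w) = (e, w)$ for all $w \in \V$, together with its elliptic projection $\hat z \in \Vms$ defined analogously to \eqref{eq:dproj}. Testing the $z$-problem with $w = e$ and using the orthogonality $(\tfkm\rho(p_0)\gd e,\gd \hat z)=0$ gives
\[
\|e\|_0^2 = (\tfkm\rho(p_0)\gd z, \gd e) = (\tfkm\rho(p_0)\gd(z-\hat z),\gd e) \preceq \|(\tfkm)^{1/2}\gd(z-\hat z)\|_0 \,\|(\tfkm)^{1/2}\gd e\|_0 .
\]
Now apply Lemma \ref{lem:main-eproj} to $z$ (its right-hand side data is $e$ itself, since $z$ solves the elliptic problem with source $e$), obtaining $\|(\tfkm)^{1/2}\gd(z-\hat z)\|_0 \preceq \Lambda^{-1/2}H\|\k^{-1/2} e\|_0 \preceq \Lambda^{-1/2}H\|e\|_0$, using $\k \geq \k_0$. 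Combining with Lemma \ref{lem:main-eproj} applied to $p$ for the factor $\|(\tfkm)^{1/2}\gd e\|_0$ and cancelling one power of $\|e\|_0$ yields the first bound.

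For the second estimate I would differentiate the projection identity \eqref{eq:dproj} in time. Since the bilinear form $(\tfkm\rho(p_0)\gd\cdot,\gd\cdot)$ is time-independent (it uses the frozen coefficient $\rho(p_0)$, not $\rho(p)$), differentiating gives $(\tfkm\rho(p_0)\gd\pt(p-\pp),\gd w) = 0$ for all $w \in \Vms$; that is, $\pt\pp$ is exactly the elliptic projection of $\pt p$. One must check that $\pt\pp \in \Vms$, which holds because $\Vms$ is a fixed finite-dimensional space and the projection depends linearly and smoothly on the data. Then $\pt e = \pt p - \pt\pp$ again lies in $\nV$, and the whole duality computation above goes through verbatim with $e$ replaced by $\pt e$ and the source term $q - \phi\pt\rho(p)$ replaced by its time derivative $\pt(q-\phi\pt\rho(p))$, giving the claimed bound.

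The main obstacle is the bootstrapping step: one must be careful that Lemma \ref{lem:main-eproj} is genuinely applicable to the dual variable $z$ — i.e. that $z - \hat z$ inherits the property $\pi(z-\hat z) = 0$ needed to invoke the spectral gap $\Lambda$ — and that the $\wk^{-1/2}$-weighted norm appearing there can be traded for the plain $\|\cdot\|_0$ norm at the cost of constants depending only on $\k_0,\k_1,\rho$ and the partition-of-unity bound $|\gd\chi_i| = \Or(H^{-1})$. A secondary subtlety is justifying the interchange of $\pt$ and the projection operator in the second estimate, which requires that the data $q$ and $\rho(p)$ be sufficiently regular in time for $\pt p$ and $\pt\pp$ to be well-defined elements of $\V$ and $\Vms$ respectively; this should be folded into (or follow from) the standing regularity assumptions on the solution of \eqref{eq:main-prob}.
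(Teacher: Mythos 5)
Your proposal is correct and follows essentially the same route as the paper: an Aubin--Nitsche duality argument with the dual elliptic problem (coefficient frozen at $\rho(p_0)$), applying Lemma \ref{lem:main-eproj} both to $p$ and to the dual solution and cancelling one factor of $\|p-\pp\|_{0}$, and then obtaining the time-derivative bound by differentiating the projection identity \eqref{eq:dproj} and repeating the computation, which is exactly what the paper's ``similar computation'' remark refers to.
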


\begin{proof}
First, we will invoke the duality argument. For each $t>0$, we define $w\in\Vz$ by 
\[
\int_{\Om}\k\rho(p_{0})\gd w\cdot\gd vd\x=\int_{\Om}(p-\pp)vd\x,\quad\feac v\in\Vz,
\]
and consider $\pw$ as elliptic projection of $w$ in $\Vms$. By Lemma \ref{lem:main-eproj}, for $v=p-\pp$, we have
\begin{align*}
\|p-\pp\|^{2}_{0}=\int_{\Om}\k\rho(p_{0})\gd w\cdot\gd (p-\pp)d\x &= \int_{\Om}\k\rho(p_{0})\gd (w-\pw)\cdot\gd (p-\pp)d\x\\
&\preceq \int_{\Om}\tfkm\rho(p_{0})\gd (w-\pw)\cdot\gd (p-\pp)d\x\\
&\leq \|(\tfkm)^{1/2}\gd(w-\pw)\|_{0}\|(\tfkm)^{1/2}\gd(p-\pp)\|_{0}\\
&\preceq \left(H\Lambda^{-1/2}\max\{\k^{-1/2}\}\|p-\pp\|_{0}\right)\\
&\quad\times\left(H\Lambda^{-1/2}\|\k^{-1/2}(q-\phi\pt\rho(p))\|_{0}\right).
\end{align*}
Hence, we have 
\[
\|p-\pp\|_{0}\preceq \Lambda^{-1}H^{2}\|\k^{-1/2}(q-\phi\pt\rho(p))\|_{0}.
\]
By a similar computation, we can obtain the second estimate. This completes the proof.
\end{proof}

We will derive an error estimate for the difference between the solution of \eqref{eq:main-prob} and the CEM-GMsFEM solution of \eqref{eq:sms-prob} using the framework of \cite{fu2022generalized}. 

\begin{thm}
Let $p$ be the solution obtained from \eqref{eq:weak-prob}, $\pms\in\Vms$ be the multiscale solution of \eqref{eq:sms-prob} using CEM-GMsFEM and $\pp$ be an elliptic projection of $p$ in $\Vms$. Then, the following error estimate holds
\begin{align*}
\|(p-\pms)(t)\|_{0}^{2}+\int_{0}^{T}\|(\tfkm)^{1/2}\gd(p-\pms)\|^{2}_{0}dt&\preceq\Lambda^{-1}H^{2}\left(\|\k^{-1/2}\pt(q-\phi\pt \rho(p))(t)\|_{0}^{2}\right.\\
&\quad+\left.\int_{0}^{T}\|\k^{-1/2}\pt(q-\phi\pt\rho(p))\|^{2}_{0}dt\right)\\
&\quad+\|(\pp-\pms)(0)\|^{2}_{0}.
\end{align*}
\end{thm}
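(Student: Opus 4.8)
The plan is to follow the standard energy-estimate route for parabolic problems, decomposing the error as $p-\pms=(p-\pp)+(\pp-\pms)$, where the first piece is controlled by Lemma~\ref{lem:main-eproj} and Lemma~\ref{lem:cor}, and the second piece $\theta:=\pp-\pms\in\Vms$ is estimated by testing the error equation against $\theta$. First I would write down the error equation: subtracting \eqref{eq:sms-prob} from \eqref{eq:weak-prob} restricted to test functions $v\in\Vms$, and inserting the definition \eqref{eq:dproj} of the elliptic projection, one obtains for all $v\in\Vms$
\[
(\phi\pt\rho(p)-\phi\pt\rho(\pms),v)+\bigl(\tfkm\rho(p)\gd p-\tfkm\rho(\pms)\gd\pms,\gd v\bigr)=0,
\]
and then I would add and subtract the term $(\tfkm\rho(p_{0})\gd(p-\pp),\gd v)=0$ to split the elliptic part into a ``good'' bilinear piece in $\theta$ plus nonlinear perturbation terms measuring how far $\rho(\cdot)$ and the coefficient deviate from the frozen coefficient $\rho(p_{0})$.

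Next I would choose $v=\theta$. The parabolic term is handled by writing $\pt\rho(p)-\pt\rho(\pms)$, using $\rho(p)=\rref e^{c(p-\pref)}$ so that $\rho$ is Lipschitz and $C^1$ on the relevant pressure range (using $\k_0\le\k\le\k_1$ and boundedness of the pressures, which I would assume as in the preceding lemmas), to bound $\|\rho(p)-\rho(\pms)\|_0\preceq\|p-\pms\|_0$ and similarly for time derivatives; the term $(\phi\pt\rho(\pp)-\phi\pt\rho(\pms),\theta)$ produces, up to lower-order terms, something comparable to $\tfrac12\tfrac{d}{dt}\|\theta\|_0^2$ plus a contribution involving $\pt(p-\pp)$ controlled by Lemma~\ref{lem:cor}. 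The elliptic term $(\tfkm\rho(p_0)\gd\theta,\gd\theta)$ gives the coercive quantity $\|(\tfkm)^{1/2}\gd\theta\|_0^2$, while the nonlinear discrepancies $\bigl((\rho(p)-\rho(p_0))\gd p-(\rho(\pms)-\rho(p_0))\gd\pms,\ \tfkm\gd\theta\bigr)$ are split and absorbed: part into $\varepsilon\|(\tfkm)^{1/2}\gd\theta\|_0^2$ via Young's inequality, part into $\|p-\pp\|_0$, $\|(\tfkm)^{1/2}\gd(p-\pp)\|_0$ terms bounded by Lemmas~\ref{lem:main-eproj}--\ref{lem:cor}, and part into $\|\theta\|_0^2$ to be closed by Gr\"onwall. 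After absorbing, integrating in time from $0$ to $t$, and applying Gr\"onwall's inequality, one gets
\[
\|\theta(t)\|_0^2+\int_0^T\|(\tfkm)^{1/2}\gd\theta\|_0^2\,dt\preceq\|\theta(0)\|_0^2+\Lambda^{-1}H^2\Bigl(\|\k^{-1/2}\pt(q-\phi\pt\rho(p))(t)\|_0^2+\int_0^T\|\k^{-1/2}\pt(q-\phi\pt\rho(p))\|_0^2\,dt\Bigr),
\]
and combining with the bounds on $p-\pp$ from Lemma~\ref{lem:cor} (and the triangle inequality) yields the claimed estimate, since $\|\theta(0)\|_0=\|(\pp-\pms)(0)\|_0$.

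The main obstacle I anticipate is the treatment of the nonlinear convection-type term $\bigl(\tfkm(\rho(p)\gd p-\rho(\pms)\gd\pms),\gd\theta\bigr)$: one must carefully split it as $\rho(p)\gd(p-\pms)+(\rho(p)-\rho(\pms))\gd\pms$, then further peel off the elliptic-projection part $\rho(p)\gd(p-\pp)$ (controlled by Lemma~\ref{lem:main-eproj}) versus $\rho(p)\gd\theta$ (which, after subtracting the frozen-coefficient term, is $(\rho(p)-\rho(p_0))\gd\theta$ and must be absorbed into the coercive term using smallness of $\|\rho(p)-\rho(p_0)\|_{L^\infty}$ or a Gr\"onwall argument), while the factor $\gd\pms$ in the remaining term is only bounded a priori in $L^2$, so controlling $(\rho(p)-\rho(\pms))\gd\pms\cdot\gd\theta$ requires an $L^\infty$ bound on $\rho(p)-\rho(\pms)$ or an inverse inequality on the coarse space $\Vms$ — this is exactly where ``some novel methodologies'' referenced in the introduction are needed, and I would handle it by exploiting the Lipschitz continuity of $\rho$ together with a uniform a priori $L^\infty$ bound on $\pms$ (or $p-\pms$) on the coarse mesh, so that this term becomes $\preceq\|p-\pms\|_0\,\|(\tfkm)^{1/2}\gd\theta\|_0$ and can be split by Young's inequality with the $\|p-\pms\|_0^2$ part closed by Gr\"onwall.
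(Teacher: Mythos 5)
Your overall strategy coincides with the paper's: subtract \eqref{eq:sms-prob} from \eqref{eq:weak-prob}, test with $\theta=\pp-\pms\in\Vms$, extract $\tfrac{d}{dt}\|\theta\|_{0}^{2}$ from the time term via the Lipschitz/monotonicity properties of $\rho$, get coercivity in $\|(\tfkm)^{1/2}\gd\theta\|_{0}^{2}$ from the diffusion term, absorb the remainders with Young's inequality, apply Gr\"onwall, and conclude with the triangle inequality and Lemmas \ref{lem:main-eproj}--\ref{lem:cor}. However, two of your specific choices in the elliptic term create a gap. First, you take coercivity from the frozen coefficient $\rho(p_{0})$ after inserting the orthogonality \eqref{eq:dproj}; this leaves the term $\bigl(\tfkm(\rho(\pms)-\rho(p_{0}))\gd\theta,\gd\theta\bigr)$, which is of the same size as the coercive term itself and cannot be absorbed by Gr\"onwall (it carries $|\gd\theta|^{2}$, not $|\theta|^{2}$) nor by a smallness assumption on $\|\rho(p)-\rho(p_{0})\|_{L^{\infty}}$, which the paper never makes. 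The paper avoids this entirely by taking the coercive piece with the coefficient $\rho(\pms)$ itself, i.e.\ $I_{7}=\bigl(\tfkm\rho(\pms)\gd(\pp-\pms),\gd(\pp-\pms)\bigr)\geq C\|(\tfkm)^{1/2}\gd(\pp-\pms)\|_{0}^{2}$, which only needs $\rho$ bounded below, and uses \eqref{eq:dproj} only where it helps (through Lemma \ref{lem:main-eproj} for $p-\pp$).

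Second, your splitting $\rho(p)\gd p-\rho(\pms)\gd\pms=\rho(p)\gd(p-\pms)+(\rho(p)-\rho(\pms))\gd\pms$ puts the density difference against $\gd\pms$, and, as you note, this needs $\gd\pms$ in $L^{\infty}$; but the remedy you propose (a uniform $L^{\infty}$ bound on $\pms$ or on $p-\pms$) does not give that, and an inverse inequality on $\Vms$ would introduce mesh- and contrast-dependent constants that defeat the purpose of the estimate. The paper's fix is simply the opposite splitting: in $I_{8}$ it writes $\rho(p)\gd p-\rho(\pms)\gd\pp=(\rho(p)-\rho(\pms))\gd p+\rho(\pms)\gd(p-\pp)$, so the density difference multiplies $\gd p$, which is controlled by the assumption $p\in W^{1,\infty}(\Om)$, giving $\preceq\|p-\pms\|_{0}\,\|(\tfkm)^{1/2}\gd(\pp-\pms)\|_{0}$ plus the projection error $\|(\tfkm)^{1/2}\gd(p-\pp)\|_{0}$; no bound on $\gd\pms$ is ever required. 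With these two repairs (coercivity via $\rho(\pms)$ and the density difference paired with $\gd p$), the rest of your argument (chain rule for the time term, Young, Gr\"onwall, Lemmas \ref{lem:main-eproj} and \ref{lem:cor}, triangle inequality) goes through exactly as in the paper.
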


\begin{proof}
Subtracting \eqref{eq:sms-prob} from \eqref{eq:weak-prob}, and using \eqref{eq:res-prob}, we have that
\[
(\phi\pt\rho(p),v)+\left(\tfkm\rho(p)\gd p,\gd v\right)-(\phi\pt\rho(\pms),v)-\left(\tfkm\rho(\pms)\gd\pms,\gd v\right)= 0,\quad \feac v\in\Vms.
\]
Since $\pp\in\Vms$, we put $v=\pp-\pms$, then follows that
\begin{equation}
\label{eq:01lem2}
\underset{I_{1}}{\underbrace{(\phi\pt(\rho(p)-\rho(\pms)),\pp-\pms)}}+\underset{I_{2}}{\underbrace{\left(\tfkm(\rho(p)\gd p-\rho(\pms)\gd\pms),\gd (\pp-\pms)\right)}}=0.
\end{equation}
About $I_{1}$, we can rewrite  
\begin{equation}
\label{eq:012lem2}
I_{1}=\underset{I_{3}}{\underbrace{(\phi\pt(\rho(\pp)-\rho(\pms)),\pp-\pms)}}+\underset{I_{4}}{\underbrace{(\phi\pt(\rho(p)-\rho(\pp)),\pp-\pms)}}.
\end{equation}
For $I_{3}$, we obtain
\begin{align*}
I_{3}&=\frac{d}{dt}\int_{\Om}\phi\int_{0}^{\pp-\pms}\rho'(\pp+\xi)\xi d\xi d\x-\underset{I_{5}}{\underbrace{\int_{\Om}\phi\int_{0}^{\pp-\pms}\rho''(\pp+\xi)\pt\pp\xi d\xi d\x}}\\
&\quad+\underset{I_{6}}{\underbrace{\int_{\Om}\phi\int_{\Om}(\rho'(\pms)-\rho'(\pp))\pt\pp(\pp-\pms)d\x}}.
\end{align*}
Following \cite{park2005mixed,kim2007multiscale}, we have that the terms $I_{5}$ and $I_{6}$ are bounded by $\|\pp-\pms\|^{2}_{0}$.
We deduce that 
\[
I_{3}\geq \frac{d}{dt}\int_{\Om}\phi\int_{0}^{\pp-\pms}\rho'(\pp+\xi)\xi d\xi d\x-C_{1}\|\pp-\pms\|^{2}_{0},
\]
where $C_{1}$ is a positive constant independent of the mesh size. In virtue of $\rho'$ being bounded below positively, we have
\[
\int_{\Om}\phi\int_{0}^{\pp-\pms}\rho'(\pp+\xi)\xi d\xi d\x\geq C_{2}\|\pp-\pms\|^{2}_{0}.
\]
Then, for $I_{3}$, we obtain
\begin{equation}
\label{eq:0121lem02}
I_{3}=(\phi\pt(\rho(\pp)-\rho(\pms)),\pp-\pms)\geq\frac{d}{dt}\|\pp-\pms\|^{2}_{0}-C_{3}\|\pp-\pms\|^{2}_{0}.
\end{equation}
For $I_{4}$, by using the chain rule and Young's inequality, one can get
\begin{align*}
I_{4}&=(\phi(\rho'(p)-\rho'(\pp))\pt\pp,\pp-\pms)+(\phi\rho'(p)(\pt p-\pt\pp),\pp-\pms)\\
&\preceq \|p-\pp\|^{2}_{0}+\|\pt(p-\pp)\|^{2}_{0}+\|\pp-\pms\|^{2}_{0}.
\end{align*}
Now, for $I_{2}$, we get
\[
I_{2}=\underset{I_{7}}{\underbrace{\left(\tfkm(\rho(\pms)\gd(\pp-\pms),\gd (\pp-\pms)\right)}}+\underset{I_{8}}{\underbrace{\left(\tfkm\left(\rho(p)\gd p-\rho(\pms)\gd\pp\right),\gd (\pp-\pms)\right)}}
\]
Then, for $I_{7}$ we have
\[
I_{7}\geq C\|(\tfkm)^{1/2}\gd(\pp-\pms)\|^{2}_{0},
\]
and for $I_{8}$ by invoking Young's inequality, one obtains
\begin{align*}
I_{8}&=\left(\tfkm(\rho(p)-\rho(\pms))\gd p,\gd (\pp-\pms)\right)+\left(\tfkm(\rho(\pms)\gd p-\rho(\pms)\gd\pp),\gd (\pp-\pms)\right)\\
&\leq C\left( \|(\tfkm)^{1/2}\gd(p-\pp)\|^{2}_{0}+\|p-\pms\|^{2}_{0}\right)+\epsilon\|(\tfkm)^{1/2}\gd(\pp-\pms)\|^{2}_{0},
\end{align*}
where in the last inequality we use the boundedness of $\rho$ and $\rho'$, and $p\in W^{1,\infty}(\Om)$.
Combining the above estimates and taking $\epsilon$ small enough, we can obtain
\begin{align*}
\frac{d}{dt}\|\pp-\pms\|^{2}_{0}+\|(\tfkm)^{1/2}\gd(\pms-\pp)\|^{2}_{0}&\preceq\|(\tfkm)^{1/2}\gd(p-\pp)\|^{2}_{0}+\|p-\pp\|^{2}_{0}+\|\pt(p-\pp)\|^{2}_{0}\\
&\quad+\|\pp-\pms\|^{2}_{0}.
\end{align*}
Integrating with respect to time $t$ and invoking the continuous Gronwall's inequality \cite{cannon1999modified}, we can infer that
\begin{align*}
\|(\pp-\pms)(t)\|^{2}_{0}+\int_{0}^{T}\|(\tfkm)^{1/2}\gd(\pms-\pp)\|^{2}_{0}dt&\preceq\|(\pp-\pms)(0)\|^{2}_{0}+\int_{0}^{T}\|(\tfkm)^{1/2}\gd(p-\pp)\|^{2}_{0}dt\\
&\quad+\int_{0}^{T}\left(\|p-\pp\|^{2}_{0}+\|\pt(p-\pp)\|^{2}_{0}\right)dt.
\end{align*}
Thus, we use the triangle inequality to get
\begin{align*}
\|(p-\pms)(t)\|^{2}_{0}+\int_{0}^{T}\|(\tfkm)^{1/2}\gd(p-\pms)\|^{2}_{0}dt&\preceq\|(\pp-\pms)(0)\|^{2}_{0}+\int_{0}^{T}(\|(\tfkm)^{1/2}\gd(p-\pp)\|^{2}_{0}dt\\
&\quad+\int_{0}^{T}\left(\|p-\pp\|^{2}_{0}+\|\pt(p-\pp)\|^{2}_{0}\right)dt+ \|(p-\pp)(t)\|^{2}_{0}.
\end{align*}
Finally, the proof is completed by using Lemmas \ref{lem:main-eproj} and \ref{lem:cor}.
\end{proof}

\subsection{A posteriori error estimate}
We shall give an \emph{a posteriori} error estimate, which provides a computable error bound to access the quality of the numerical solution. To begin, notice that since $\Vms\subset\Vh$, we can derive from the fully-discrete approximation a residual expression defined by
\begin{equation}
\label{eq:res-prob}
\Rmsn(v):=(\phi\rho(\pmsn),v)-(\phi\rho(\pmsnm),v)+\dtn\left(\tfkm\rho(\pmsn)\gd\pmsn,\gd v\right)-\dtn(q,v),\quad\feac v\in\Vms.
\end{equation}
We also consider, the local residuals. For each coarse node $x_{i}$, we define $\oi$ be the set of coarse blocks having the vertex $x_{i}$. For each coarse neighborhood $\oi$, we define the local residual functional $r_{i}:\V\to\R$ by
\[
\rni(v)=r(\chi_{i} v)=(\phi\rho(\pmsn),\chi_{i} v)-(\phi\rho(\pmsnm),\chi_{i} v)+\dtn\left(\tfkm\rho(\pmsn)\gd\pmsn,\gd \chi_{i} v\right)-\dtn(q,\chi_{i} v),
\]
for all $v\in\V$. The local residual $r_{i}$ gives a measure of the error $p-\pmsn$ in the coarse neighborhood $\oi$.

\begin{thm}
Let $p_{n}$ be the solution obtained from \eqref{eq:main-prob} at $t_{n}$ and $\pms_{n}\in\Vms$ denote the CEM-GMsFEM solution of the fully discrete scheme of \eqref{eq:fms-prob} at $t_{n}$. Then, there exists a positive constant $C$ independent of the mesh size such that
\[
\|p_{\Nt}-\pms_{\Nt}\|^{2}_{0}+\sum_{n=1}^{\Nt}\dtn\|(\tfkm)^{1/2}\gd(\pn-\pmsn)\|^{2}_{0}\preceq (1+\Lambda^{-1})\sum_{n=1}^{\Nt}\sum_{i=1}^{\Nc}\|\widetilde{r}^{n}_{i}\|^{2}_{\V^{*}_{i}}+\|p_{0}-\pms_{0}\|^{2}_{0},
\]
where
\[
\wrni=\dtn\int_{\oi}q_{n}vd\x-\int_{\oi}\phi(\rho(\pms_{n})-\rho(\pms_{n-1}))vd\x - \dtn\int_{\oi}\tfkm\rho(\pms_{n})\gd\pms_{n}\cdot\gd vd\x,
\]
and the residual norm is defined by
\[
\|\wrni\|_{\V^{*}_{i}}=\sup_{v\in\Lt(t_{n},t_{n+1};\Hoz(\oi))}\frac{\wrni}{\|v\|_{\V_{i}}}.
\]
\end{thm}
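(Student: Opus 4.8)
The plan is to mirror the structure of the semi-discrete theorem, but now working at the fully-discrete level and replacing the elliptic-projection bounds of Lemmas \ref{lem:main-eproj}--\ref{lem:cor} by the computable local residuals $\wrni$. First I would introduce the error splitting $\pn-\pmsn = (\pn-\pp_n) + (\pp_n-\pmsn)$, where $\pp_n$ is the elliptic projection of $\pn$ into $\Vms$, and test the fully-discrete error equation obtained by subtracting \eqref{eq:fms-prob} from the fully-discrete version of \eqref{eq:weak-prob} with $v=\pp_n-\pmsn\in\Vms$. Exactly as in the proof of the previous theorem, the mass term splits as $I_3+I_4$: the $I_3$ piece yields, after using that $\rho'$ is bounded below, a discrete time-difference $\|\pp_n-\pmsn\|_0^2-\|\pp_{n-1}-\pmsn\|_0^2$ up to a $C\Delta_n\|\pp_n-\pmsn\|_0^2$ remainder (the discrete analogue of $I_5,I_6$), and the $I_4$ piece is controlled by $\|p_n-\pp_n\|_0^2$ plus a backward-difference quotient of $p-\pp$; the stiffness term splits as $I_7+I_8$, with $I_7$ coercive and $I_8$ absorbed via Young's inequality into $\|(\tfkm)^{1/2}\gd(p_n-\pp_n)\|_0^2+\|p_n-\pmsn\|_0^2+\epsilon\|(\tfkm)^{1/2}\gd(\pp_n-\pmsn)\|_0^2$.

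The genuinely new ingredient is relating the elliptic-projection error $p_n-\pp_n$ to the local residuals rather than to data norms. For this I would exploit $\Vms\subset\Vh$ and the partition of unity $\{\chi_i\}$: by definition of the residual \eqref{eq:res-prob}, for $v\in\Vh$ we have $\Rmsn(v)=\sum_i \rni(v)$, and since $\Vms\subset\Vh$ the residual vanishes on $\Vms$, so $\Rmsn$ measures exactly $p_n-\pmsn$ tested against fine-grid functions. The key estimate is that the elliptic-projection error satisfies $\|(\tfkm)^{1/2}\gd(p_n-\pp_n)\|_0^2 \preceq (1+\Lambda^{-1})\sum_i \|\wrni\|_{\V_i^*}^2$: one writes $a(p_n-\pp_n, w) = \sum_i \rni(\chi_i^{-1}\cdot)$-type expressions localized via the partition of unity, uses the CEM spectral estimate (as in \eqref{eq:lemprj2}, giving the $\Lambda^{-1}$ factor when passing from the $s$-norm to data through $\pi_i$), and bounds the localized residual by $\|\wrni\|_{\V_i^*}$ using $|\gd\chi_i|=\Or(H^{-1})$ and the definition of the $\V_i^*$ norm over $\Lt(t_n,t_{n+1};\Hoz(\oi))$. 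The bounded-overlap property of the coarse neighborhoods $\oi$ converts the sum of local contributions into the global bound with a constant independent of the heterogeneity.

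With these two ingredients in hand, I would combine them into a discrete energy inequality of the form
\[
\|\pp_n-\pmsn\|_0^2 - \|\pp_{n-1}-\pmsn\|_0^2 + \Delta_n\|(\tfkm)^{1/2}\gd(\pp_n-\pmsn)\|_0^2 \preceq \Delta_n\,C\|\pp_n-\pmsn\|_0^2 + (1+\Lambda^{-1})\sum_i\|\wrni\|_{\V_i^*}^2 + (\text{time-difference terms of } p-\pp),
\]
then sum over $n=1,\dots,\Nt$, telescope the mass terms, and apply the discrete Gronwall inequality to absorb the $\Delta_n\|\pp_n-\pmsn\|_0^2$ term (assuming $\Delta_n$ small enough), obtaining control of $\|\pp_{\Nt}-\pmsn\|_0^2+\sum_n\Delta_n\|(\tfkm)^{1/2}\gd(\pp_n-\pmsn)\|_0^2$ by the residual sum plus $\|p_0-\pms_0\|_0^2$ (after bounding $\|\pp_0-\pms_0\|_0\le\|p_0-\pp_0\|_0+\|p_0-\pms_0\|_0$ and folding the projection error at $t=0$ into the residual/data terms). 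Finally a triangle inequality $p_n-\pmsn=(p_n-\pp_n)+(\pp_n-\pmsn)$ together with the projection estimate for $p_n-\pp_n$ gives the stated bound.

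I expect the main obstacle to be the localization argument in the second paragraph: making precise how the global elliptic-projection error is bounded by the \emph{sum} of the computable local residual norms $\|\wrni\|_{\V_i^*}$ while keeping the correct $(1+\Lambda^{-1})$ dependence and a constant independent of the contrast. This requires carefully combining the partition-of-unity decomposition $w=\sum_i\chi_i w$, the spectral decay estimate \eqref{eq:lspec-prob}/\eqref{eq:lemprj2} (which is what produces $\Lambda^{-1}$), the bound $|\gd\chi_i|=\Or(H^{-1})$, and the bounded-overlap of the $\oi$ — together with handling the time-integrated dual norm over $\Lt(t_n,t_{n+1};\Hoz(\oi))$ consistently with the discrete-in-time setting, which is where the nonlinear terms $\rho(\pmsn)$ must be treated as frozen coefficients with their uniform upper and lower bounds.
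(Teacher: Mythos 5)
Your plan has a genuine gap at the step you yourself flag as the ``genuinely new ingredient''. You propose to bound the elliptic-projection error of the exact solution by the computable residuals, namely $\|(\tfkm)^{1/2}\gd(p_n-\pp_n)\|_0^2 \preceq (1+\Lambda^{-1})\sum_i\|\wrni\|_{\V^*_i}^2$, and then to run the semi-discrete argument ($I_3,I_4,I_7,I_8$) with $v=\pp_n-\pmsn$. But $\wrni$ is the defect of the \emph{multiscale solution} $\pmsn$ in the fully discrete equation; it says nothing directly about how well $\Vms$ approximates $p_n$. The identity $a(p_n-\pp_n,w)=a(p_n,w)-a(\pp_n,w)$ cannot be rewritten in terms of $\wrni$ without reintroducing $p_n-\pmsn$: the only route from residuals to the projection error is through quasi-optimality, $\|(\tfkm)^{1/2}\gd(p_n-\pp_n)\|_0\preceq\|(\tfkm)^{1/2}\gd(p_n-\pmsn)\|_0$, i.e.\ through the very quantity the theorem is trying to bound, so the argument as sketched is circular. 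Moreover, the stated bound contains no data or projection terms on the right-hand side, whereas your discrete energy inequality leaves $\|p_n-\pp_n\|_0^2$, its discrete time differences, and $\|(\tfkm)^{1/2}\gd(p_n-\pp_n)\|_0^2$ outstanding; without the (unproved) projection-by-residual estimate these can only be bounded by data as in Lemmas \ref{lem:main-eproj}--\ref{lem:cor}, which would give an a priori, not an a posteriori, bound.

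The paper's proof is organized differently and avoids this issue: it subtracts the two fully discrete schemes, tests directly with $\pn-\pmsn$, and uses the positivity of $\rho$, monotonicity-type lower bounds and Young's inequality to produce the telescoping mass term and the coercive stiffness term. The right-hand side is then expressed through the residual functional $\Rmsn$, and the Galerkin-type property $\Rmsn(v)=0$ for $v\in\Vms$ is exploited by inserting the elliptic projection $\pw$ of $w=\pmsn-\pn$ (a projection of the \emph{error}, not of $p_n$), so that $\Rmsn(w)=\Rmsn(w-\pw)=\sum_i\wrni(\chi_i(w-\pw))$. The factor $(1+\Lambda^{-1})$ then comes from estimating $\|\chi_i(w-\pw)\|_{\V_i}$: the product rule with $|\gd\chi_i|=\Or(H^{-1})$ produces $\|w-\pw\|_{s(\oi)}$, which is bounded by $\Lambda^{-1/2}\|(\tfkm)^{1/2}\gd w\|_{0,\oi}$ via a local duality argument and the spectral estimate, while the remaining gradient term is handled by the orthogonality of the projection; the extra nonlinear term with $\rho(\pn)-\rho(\pmsn)$ is absorbed, and summation over $n$ plus the discrete Gronwall inequality concludes. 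So the partition-of-unity and spectral machinery you identified is indeed the right tool, but it must be applied to $w-\pw$ inside the residual functional, not to the projection error of the exact solution; as written, your proposal does not yield the stated estimate.
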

\begin{proof}
Subtracting \eqref{eq:fms-prob} from \eqref{eq:fweak-prob}, we get for $p\in\V$ at $t_{n}$
\begin{equation}
\label{eq:pos01}
(\phi(\rho(\pn)-\rho(\pmsn)),v)-(\phi(\rho(\pnm)-\rho(\pmsnm),v)+\dtn\left(\tfkm(\rho(\pn)\gd\pn-\rho(\pmsn)\gd\pmsn,\gd v\right)=0,
\end{equation}
for each $v\in\Vms$ Putting $v = \pn-\pmsn$ and using the fact that $\rho$ is bounded below positively, we easily obtain that
\[
(\phi(\rho(\pn)-\rho(\pmsn)),\pn-\pmsn)\geq C\|\pn-\pmsn\|^{2}_{0}.
\]
Similarly, for the second term of \eqref{eq:pos01}, we can use the boundedness of $\rho$ and the Young's inequality to yield
\[
(\phi(\rho(\pnm)-\rho(\pmsnm),\pn-\pmsn)\leq C\|\pnm-\pmsnm\|^{2}_{0}+\epsilon\|\pn-\pmsn\|^{2}_{0}.
\] 
Gathering the above inequalities and for $\epsilon$ small enough, we arrive to 
\[
\begin{split}
(\phi(\rho(\pn)-\rho(\pmsn)),\pn-\pmsn)-(\phi(\rho(\pnm)-\rho(\pmsnm),\pn-\pmsn)&\geq C\left(\|\pn-\pmsn\|^{2}_{0}\right.\\
&\quad-\left.\|\pnm-\pmsnm\|^{2}_{0}\right).
\end{split}
\]
For third term of \eqref{eq:pos01}, we have that
\[
\left(\tfkm\left(\rho(\pn)\gd\pn-\rho(\pmsn)\gd\pmsn\right),\gd (\pn-\pmsn)\right)\geq C\|(\tfkm)^{1/2}\gd(\pn-\pmsn)\|_{0}^{2}.
\]
Thus, these above inequalities drive us the expression
\begin{align*}
\|\pn-\pmsn\|^{2}_{0}&-\|\pnm-\pmsnm\|^{2}_{0}+\dtn\|(\tfkm)^{1/2}\gd(\pn-\pmsn)\|^{2}_{0}\preceq (\phi(\rho(\pn)-\rho(\pmsn)),\pn-\pmsn)\\
& \quad-(\phi(\rho(\pnm)-\rho(\pmsnm)),\pn-\pmsn)\\
& \quad+\dtn\left(\tfkm\left(\rho(\pn)\gd\pn-\rho(\pmsn)\gd\pmsn\right),\gd (\pn-\pmsn)\right)\\
&=(\phi(\rho(\pn)-\rho(\pmsn)),\pn-\pmsn)-(\phi(\rho(\pnm)-\rho(\pmsnm)),\pn-\pmsn)\\
& \quad+\dtn\left(\tfkm\left(\rho(\pn)\gd\pn-\rho(\pn)\gd\pmsn\right),\gd (\pn-\pmsn)\right)\\
& \quad+\dtn\left(\tfkm\left(\rho(\pn)\gd\pmsn-\rho(\pmsn)\gd\pmsn\right),\gd (\pn-\pmsn)\right).
\end{align*}
Reorganizing the terms and using the definition of the weak formulation \eqref{eq:main-prob}, we get that
\begin{equation}
\label{eq:pos02}
\begin{split}
&\|\pn-\pmsn\|^{2}_{0}-\|\pnm-\pmsnm\|^{2}_{0}+\dtn\|(\tfkm)^{1/2}\gd(\pn-\pmsn)\|^{2}_{0}\preceq\dtn(q^{n},\pn-\pmsn)\\
&\quad-(\phi(\rho(\pmsn)-\rho(\pmsnm)),\pn-\pmsn)-\dtn\left(\tfkm\rho(\pn)\gd\pmsn,\gd (\pn-\pmsn)\right)\\
&\quad+\dtn\left(\tfkm\left(\rho(\pn)-\rho(\pmsn)\right)\gd\pmsn,\gd (\pn-\pmsn)\right).
\end{split}
\end{equation}
We will limit the right-hand side of \eqref{eq:pos02}. In light of residual expression \eqref{eq:res-prob}, we have that
\[
\Rms(v)=0,\quad\feac v\in\Vms.
\]
Denote $w=\pmsn-\pn$ and use $\pw\in\Vms$, the elliptic projection of $w$. Thus,
\begin{align*}
\Rmsn(w)=\Rmsn(w-\pw)&=\dtn(q_{n},w-\pw)-(\phi(\rho(\pmsn)-\rho(\pmsnm)),w-\pw)\\
&\quad-\dtn\left(\tfkm\rho(\pmsn)\gd\pmsn,\gd (w-\pw)\right).
\end{align*}
Let us rewrite $\Rmsn(w-\pw)=\sum_{i=1}^{\Nc}\wrni(\chi_{i}(w-\pw))$ \cite{chung2018fast,li2019constraint}, then
\begin{equation}
\label{eq:main-rin}
\begin{split}
\sum_{i=1}^{\Nc}\wrni(\chi_{i}(w-\pw))&=\dtn\sum_{i=1}^{\Nc}\left(\int_{\oi}q_{n}\chi_{i}(w-\pw)d\x-\int_{\oi}\phi\frac{\rho(\pmsn)-\rho(\pmsnm)}{\dtn}\chi_{i}(w-\pw)d\x\right.\\
&\quad-\left.\int_{\oi}\tfkm\rho(\pmsn)\gd\pmsn\cdot\gd\chi_{i}(w-\pw)d\x\right).
\end{split}
\end{equation}
Note that
\begin{equation}
\label{eq:ps01}
\sum_{i=1}^{\Nc}\widetilde{r}^{n}_{i}(\chi_{i}(w-\pw))\leq \sum_{i=1}^{\Nc}\|\widetilde{r}^{n}_{i}\|_{\V^{*}_{i}}\|\chi_{i}(w-\pw)\|_{\V_{i}},
\end{equation}
where $\|v\|_{\V_{i}}=\|v\|^{2}_{0,\oi}+\dtn\|(\tfkm)^{1/2}\gd v\|^{2}_{0,\oi}$, where $\|\cdot\|_{0,\oi}$ denotes the $\L^{2}$-norm restricted to $\oi$. Notice also that,
\begin{equation}
\label{eq:ps02}
\|(\tfkm)^{1/2}\gd\chi_{i}(w-\pw)\|_{0,\oi}\preceq\left(\|w-\pw\|^{2}_{s(\oi)}+\|(\tfkm)^{1/2}\gd(w-\pw)\|^{2}_{0,\oi}\right)^{1/2},
\end{equation}
where $\|\cdot\|_{s(\oi)}$ represents the $s$-norm restricted to $\oi$. For the second term on the right-hand side, by using the orthogonality property, \ie, $((\tfkm)^{1/2}\rho(p_{0})\gd(w-\pw),\gd v)=0$, for all $v\in\Vms$, we get
\begin{equation}
\label{eq:ps03}
\|(\tfkm)^{1/2}\gd(w-\pw)\|^{2}_{0,\oi}\preceq \|(\tfkm)^{1/2}\gd w\|^{2}_{0,\oi}.
\end{equation}
Now, for the first term on the right-hand side of \eqref{eq:ps02}, we shall use the duality argument. Let $g=\widetilde{\k}(w-\pw)$ and $z\in\Vz$ be the solution of problem below 
\[
\int_{\oi}\k\rho(p_{0})\gd z\cdot\gd vd\x=\int_{\oi}gvd\x,\quad\feac v\in\Vz.
\]
Putting $v=w-\pw$, using the Cauchy-Schwarz inequality and equation \eqref{eq:ps03}, we arrives to
\begin{align*}
\|w-\pw\|^{2}_{s(\oi)}&=\int_{\oi}g(w-\pw)d\x=\int_{\oi}\k\rho(p_{0})\gd z\cdot\gd (w-\pw)d\x\\
&=\int_{\oi}\k\rho(p_{0})\gd (z-\hat{z})\cdot\gd (w-\pw)d\x\\
&\leq \|(\k\rho(p_{0}))^{1/2}\gd(z-\hat{z})\|_{0,\oi}\|(\k\rho(p_{0}))^{1/2}\gd(w-\pw)\|_{0,\oi}\\
&\preceq \Lambda^{-1/2}\|\widetilde{\k}^{-1/2}g\|_{\L^{2}(\oi)}\|(\tfkm)^{1/2}\gd(w-\pw)\|_{0,\oi}\\
&\preceq \Lambda^{-1/2}\|w-\pw\|_{s(\oi)}\|(\tfkm)^{1/2}\gd w\|_{0,\oi}.
\end{align*}
So, we have
\begin{equation}
\label{eq:ps04}
\|w-\pw\|_{s(\oi)}\preceq \Lambda^{-1/2}\|(\tfkm)^{1/2}\gd w\|_{0,\oi}.
\end{equation}
Gathering \eqref{eq:ps02}-\eqref{eq:ps04}, we arrive to
\[
\|(\tfkm)^{1/2}\gd\chi_{i}(w-\pw)\|_{0,\oi}\preceq(1+\Lambda^{-1})^{1/2}\|(\tfkm)^{1/2}\gd w\|_{0,\oi}.
\]
Analogously, we estimate $\|\chi_{i}(w-\pw)\|_{0,\oi}\preceq\|(\tfkm)^{1/2}\gd w\|_{0,\oi}$. Therefore,
\begin{equation}
\label{eq:pos02-1}
\sum_{i=1}^{\Nc}\|\wrni\|_{\V^{*}_{i}}\|\chi_{i}(w-\pw)\|_{\V_{i}}\preceq(1+\Lambda^{-1})^{1/2}\sum_{i=1}^{\Nc}\|\wrni\|_{\V^{*}_{i}}\|(\tfkm)^{1/2}\gd w\|_{0,\oi}.
\end{equation}
For the last term of the right-hand side of \eqref{eq:pos02}, we have
\begin{equation}
\label{eq:pos02-2}
\dtn\left(\tfkm\left(\rho(\pn)-\rho(\pmsn)\right)\gd\pmsn,\gd (\pn-\pmsn)\right)\preceq \dtn\|\rho(\pn)-\rho(\pmsn)\|_{0}\|(\tfkm)^{1/2}\gd(\pn-\pmsn)\|_{0}.
\end{equation}
Combining \eqref{eq:pos02-1} and \eqref{eq:pos02-2}, and using Young's inequality, one can express for \eqref{eq:pos02} by summing over all $n$
\begin{align*}
\|p_{\Nt}-\pms_{\Nt}\|^{2}_{0}+\sum_{n=1}^{\Nt}\dtn\|(\tfkm)^{1/2}\gd(\pn-\pmsn)\|^{2}_{0}&\preceq (1+\Lambda^{-1})\sum_{n=1}^{\Nt}\sum_{i=1}^{\Nc}\|\wrni\|^{2}_{\V^{*}_{i}}\\
&\quad+\sum_{n=1}^{\Nt}\dtn\|\rho(\pn)-\rho(\pmsn)\|_{0}+\|p_{0}-\pms_{0}\|^{2}_{0}.
\end{align*}
The proof is completed by using the discrete Gronwall inequality.
\end{proof}

\section{Numerical results}
\label{sec:numerical}

We now present numerical results of the nonlinear single-phase compressible flow in highly heterogeneous porous media with the performance of the CEM-GMsFEM that are summarized in the following three separate experiments. All parameters on the flow model, boundary, and initial conditions in each numerical experiment are described in detail to allow their proper reproduction of them. The main aim of the simulation is to demonstrate the viability of the proposed numerical approximation and improve the convergence rate shown in Section \ref{sec:convergence}. We implement the CEM-GMsFEM in \texttt{Matlab} language and use the numerical experiments presented in \cite{wang2014algebraic,fu2022generalized} as a reference guide to our three-dimensional experiments. We will use the Euler-backward for the time discretization and a Newton-Raphson method with a tolerance of $10^{-6}$ for the non-linear problem. Only $2-4$ Newton's iterations are needed in the computations presented below. 
\begin{figure}[!h]
\centering
\subfloat[Perm1][$\k_{1}.$]{\includegraphics[width=0.50\textwidth]{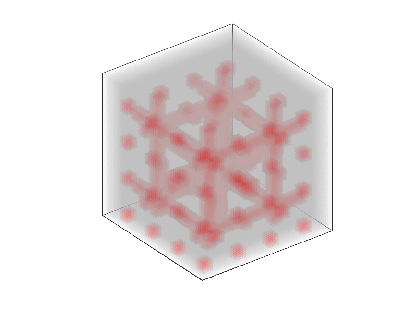}\label{fig:perm1}}
\subfloat[perm1][$\k_{2}.$]{\includegraphics[width=0.50\textwidth]{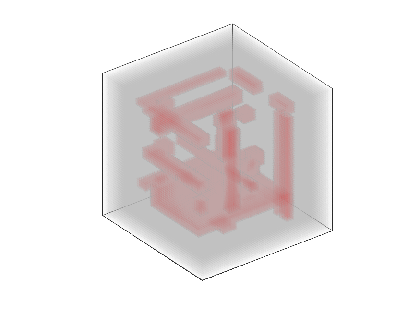}\label{fig:perm2}}\\
\subfloat[Perm3][$\k_{3}.$]{\includegraphics[width=0.50\textwidth]{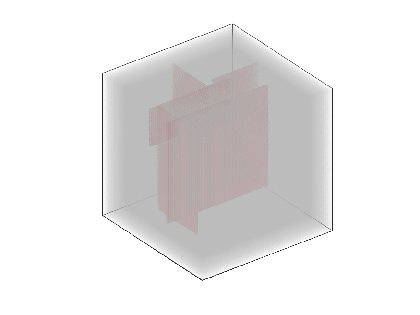}\label{fig:perm3}}
\subfloat[Perm4][$\k_{4}$ in logarithm scale.\label{fig:perm4}]{\includegraphics[width=0.45\textwidth]{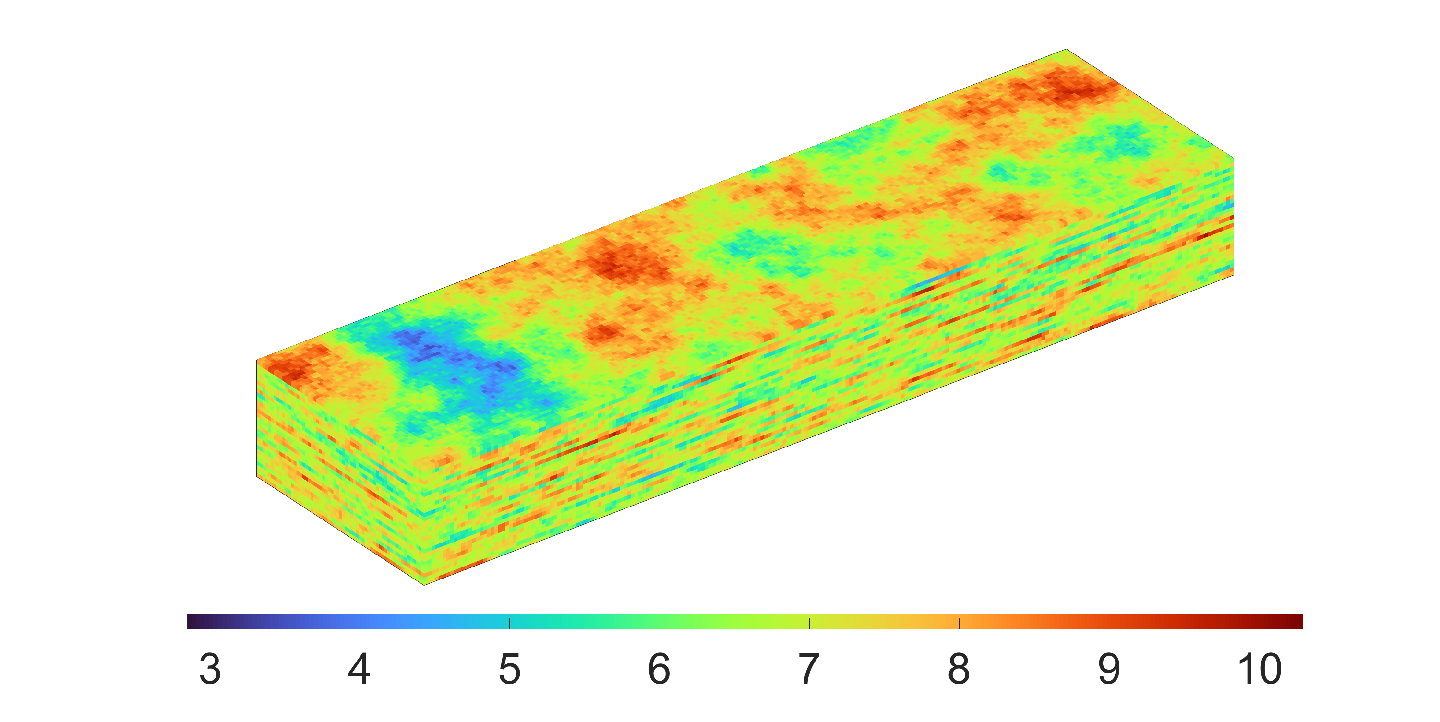}}

\caption{\label{fig:perm-fields} Different permeability fields. }
\end{figure}

We consider three high-contrast permeability fields that are the disjoint union of a background region with $10^{5}$ millidarcys and other regions of $10^{9}$ millidarcys (see Figure \ref{fig:perm-fields}). We also consider a fractured porous medium. In this case, the permeability value in fractures is much larger than in the surrounding medium. Finally, we employ the first $30$ layers of SPE10 $3$D dataset from \cite{christie2001spe}, which is widely used in the reservoir simulation community to test multiscale approaches. All experiments employ parameters of viscosity $\mu=5$cP, porosity $\phi=500$, fluid compressibility $c=1.0\times10^{-8}\text{Pa}^{-1}$, the reference pressure $\pref=2.00\times10^{7}$Pa, and the reference density $\rref=850$kg/$\mbox{m}^{3}$.

\begin{example}
\label{example:1}
For our first example, we set a fine grid resolution of $64^{3}$, with a size of $h=20$m, and a coarse grid resolution of $8^{3}$ of size $H=8h$. The coarsening factor is chosen due this coarse grid provides the most computationally efficient performance for the method. For the CEM-GMsFEM, we use $4$ basis functions and $4$ oversampling layers. We know well that the number of bases is sufficient to improve the accuracy of CEM-GMsFEM; see \cite{chung2018constraint}. Then, we have a coarse system with dimension $4916 \,(=729\times\mbox{number of basis functions})$, and the fine-scale system has a dimension of $274625$. The permeability field $\k_{1}$ used in this experiment is depicted in Figure~\ref{fig:perm1}. We define a model configuration as follows: four vertical injectors in each corner and a unit sink in the center of the domain to drive the flow, and employ the full zero Neumann boundary condition and an initial pressure field $p_{0}=2.16\times 10^{7}$Pa. We consider a fine grid resolution of $64^{3}$, whose fine grid size is given by $h=20$m, meanwhile the coarse grid resolution of $8^{3}$, whose coarse size is given by $H=8h$. The time step $\dtn$ is $7$ days, and the total time simulation will be $T=25\dtn(=175\mbox{ days})$. Figure~\ref{fig:example1} shows the pressure profiles with the sink term and zero Neumann boundary conditions in Fig.~\ref{fig:example1} for different instants at day $t=77$ and $t=147$. In this case, we obtain a relative $L^{2}$- and  $H^{1}$-error of 2.1138E-03 and 3.8058E-02 respectively.
\begin{figure}[!h]
\centering
\subfloat[example1][$t=77$.]{\includegraphics[width=0.7\textwidth]{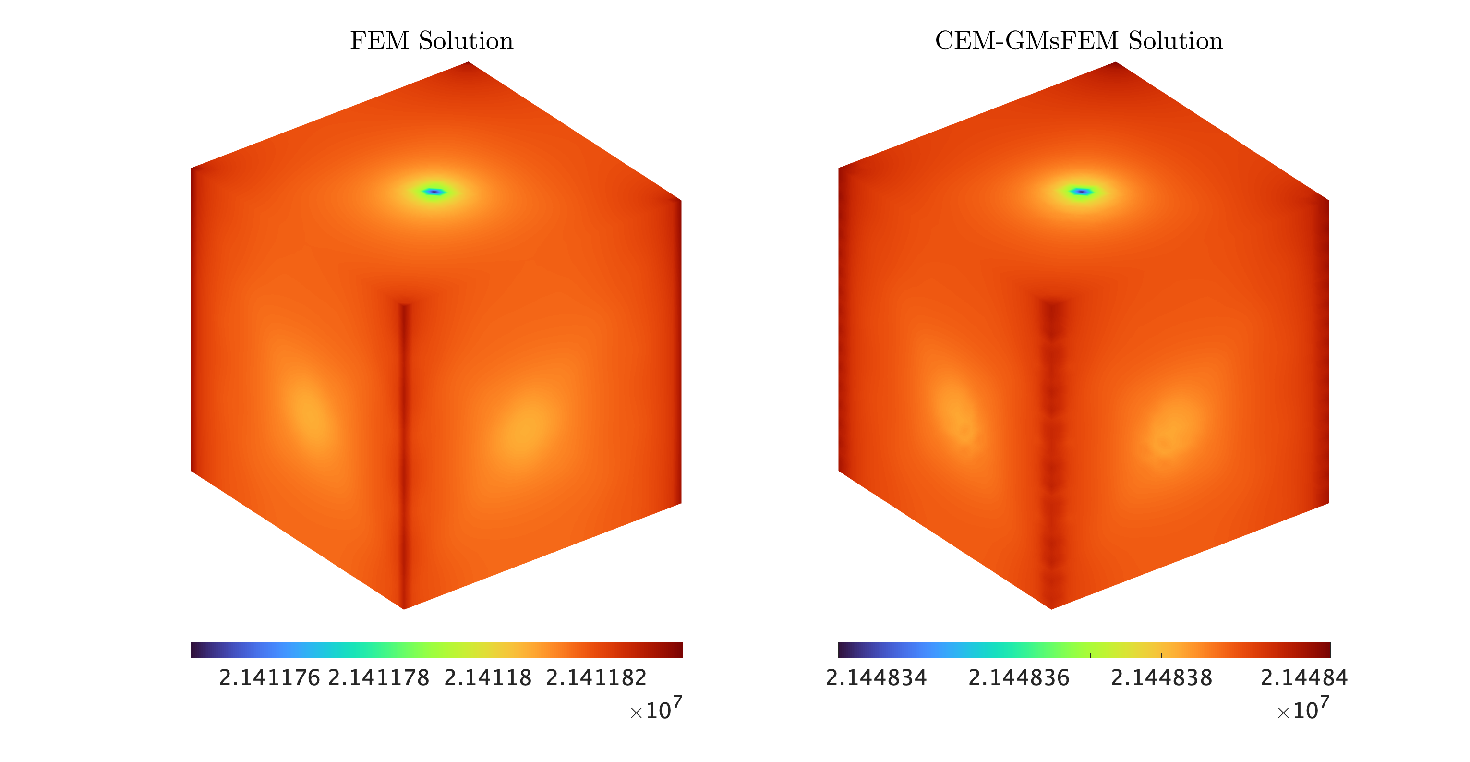}}\\
\subfloat[example1][$t=147$.]{\includegraphics[width=0.7\textwidth]{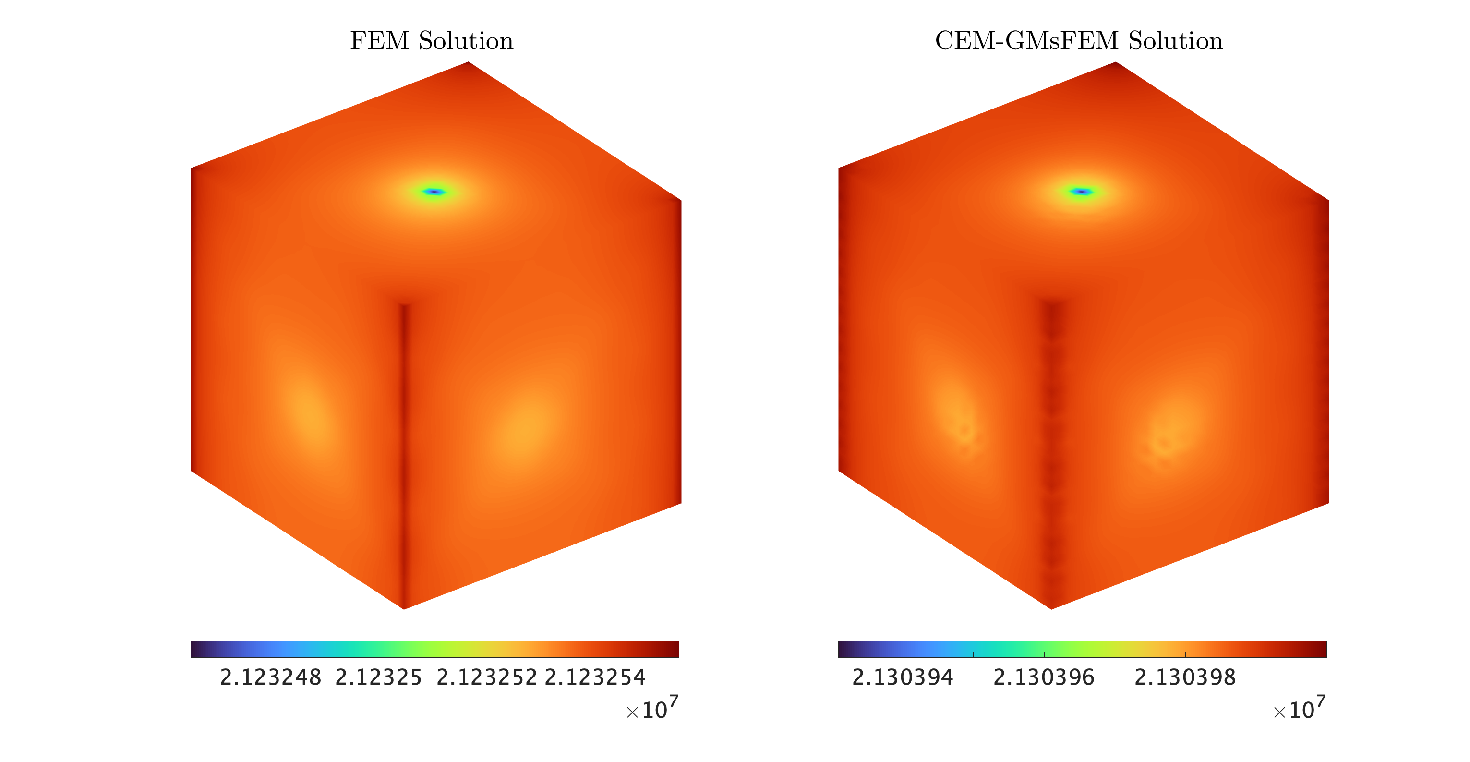}}

\caption{\label{fig:example1} Numerical solution of Example~\ref{example:1} using full-zero Neumann boundary conditions and high-contrast permeability field $\k_{1}$, see Figure~\ref{fig:perm1}. The fine-scale reference solution (left) and CEM-GMsFEM solution (right) with $4$ basis function and $4$ oversampling layers at (a) $t=77$ and (b) $t=147$.}
\end{figure}
\end{example}

\begin{example}
\label{example:2}
We consider a combination of zero Neumann and nonzero Dirichlet boundary conditions as in \cite{wang2014algebraic,fu2022generalized}. We set a fine grid resolution of $64^{3}$, with a size of $h=20$m, and different coarse grid resolutions of $4^{3},8^{3}$ and $16^{3}$. The time step $\dtn$ and total time simulation are the same as Example~\ref{example:1}. We impose zero Neumann condition on boundaries of planes $xy$ and $xz$ and let $p=2.16\times10^{7}$Pa in the first $yz$ plane and $p=2.00\times10^{7}$Pa in the last $yz$ plane for all time instants, no additional source is imposed.  The permeability field used is $\k_{2}$ (Figure~\ref{fig:perm2}). The pressure difference will drive the flow, and the initial field $p_{0}$ linearly decreases along the $x$ axis and is fixed in the $yz$ plane. Table~\ref{tab:example2} shows that numerical results use $4$ basis functions on each coarse block with different coarse grid sizes ($H=4h,8h$ and $16h$), where $\varepsilon_{0}$ and $\varepsilon_{1}$ denote the relative $L^{2}$ and energy error estimate between the reference solution and CEM-GMsFEM solution defined by
\[
\varepsilon_{0}=\left(\frac{\sum_{i=1}^{\Nt}(\ph_{i}-\pms_{i})^{2}}{\sum_{i=1}^{\Nt}(\ph_{i})^{2}}\right)^{1/2},\quad\varepsilon_{1}=\left(\frac{\sum_{i=1}^{\Nt}(\fkm)^{1/2}\rho(p_{0})\gd(\ph_{i}-\pms_{i})^{2}}{\sum_{i=1}^{\Nt}(\ph_{i})^{2}}\right)^{1/2},
\]
where $\ph_{i}$ denotes the references solution and $\pms_{i}$ is the CEM-GMsFEM approximation for $i=1,\dots,\Nt$. For instance, for a coarse grid size of $H=8h$, we obtain the relative errors $\varepsilon_{0}=$ 4.5581E-04 and $\varepsilon_{1}=$ 2.5249E-01. In Figure~\ref{fig:example2}, we depict the numerical solution profiles with a fine grid resolution of $64^{3}$ and coarse grid resolution of $8^{3}$ at day $t=105$ and $t=140$, which is hard to find any difference between the reference solution and CEM-GMsFEM solution. Therefore, we have a good agreement.
\begin{table}
\centering
\begin{tabular}{|c|c|c|c|c|}
\hline
Number of basis & $H$ &  Number of oversampling layers $m$ & $\varepsilon_{0}$ & $\varepsilon_{1}$\\
\hline
 4 & $4h$ & $3$ & 2.4004E-03 & 4.4227E-01\\
 4 & $8h$ & $4$ & 4.5581E-04 & 2.5249E-01\\
 4 & $16h$ & $5$ & 1.4257E-04 & 1.258E-01\\
\hline
\end{tabular}
\caption{\label{tab:example2} Convergence rate for Example~\ref{example:2} with different numbers of oversampling layers ($m$) with a combination of zero Neumann and nonzero Dirichlet boundary conditions.}
\end{table}

\begin{figure}[!h]
\centering
\subfloat[example2][$t=105$.]{\includegraphics[width=0.8\textwidth]{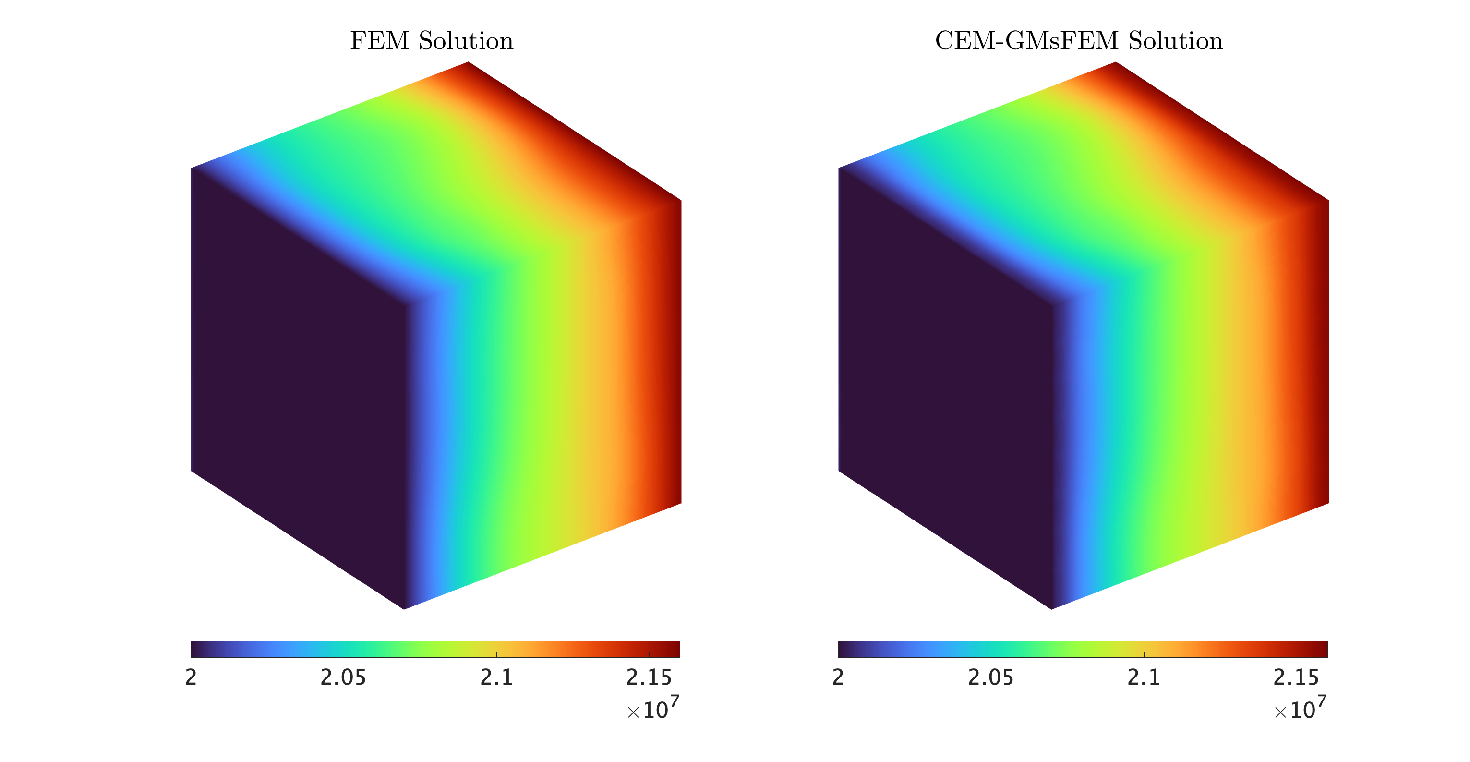}}\\
\subfloat[example2][$t=140$.]{\includegraphics[width=0.8\textwidth]{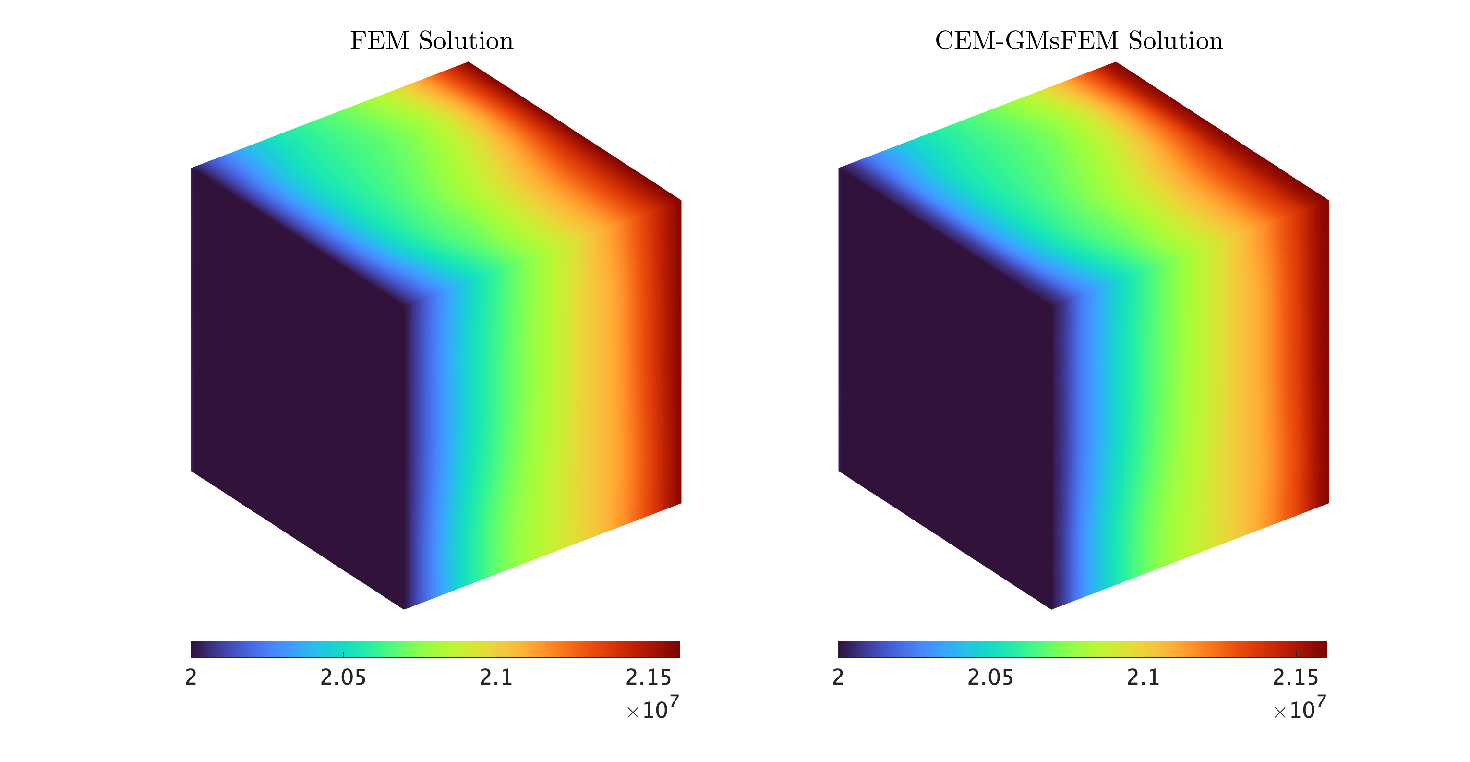}}

\caption{\label{fig:example2} Numerical solution of Example~\ref{example:2} combining a zero Neumann boundary condition and nonzero Dirichlet boundary condition. High-contrast permeability field $\k_{2}$, fine-scale reference solution (left), and CEM-GMsFEM solution (right) with $4$ basis function and $4$ number of oversampling layers at (a) $t=105$ and (b) $t=140$.}
\end{figure}
\end{example}

\begin{example}
\label{example:3}
For the third experiment, we consider the combination of zero Neumann and nonzero Dirichlet boundary conditions as in Example~\ref{example:2}. We set a fine grid resolution of $32^{3}$ (fine-scale system with dimension $35937$), with a size of $h=20$m, and different coarse grid resolutions of $4^{3},8^{3}$ and $16^{3}$ (coarse-scale system with dimension $500$, $2916$ and $19652$ respectively). The time step $\dtn$ and total time simulation are the same as Example~\ref{example:1}. The fractured medium $\k_{3}$ used is depicted in Figure~\ref{fig:perm3}. For this experiment, we employ the framework from \cite{efendiev2015hierarchical} and apply the CEM-GMsFEM to the $3$D model. The domain $\Om$ can be represented by 
\[
\Om = \Om_{0}\cup(\cup_{i}\Om_{\fract,i}),
\]
where $\Om_{0}$ represents the matrix and subscript $\fract$ denotes the fracture regions. Then, we can write the finite element discretization corresponding to equation \eqref{eq:weak-prob}
\begin{align*}
(\phi\pt\rho(\ph),v)+\left(\tfkm\rho(\ph)\gd\ph,\gd v\right)&=(\phi\pt\rho(\ph),v)_{\Om_{0}}+\sum_{i}(\phi\pt\rho(\ph),v)_{\Om_{\fract,i}}\\
&\quad+\left(\tfkm\rho(\ph)\gd\ph,\gd v\right)_{\Om_{0}}+\sum_{i}\left(\tfkm\rho(\ph)\gd\ph,\gd v\right)_{\Om_{\fract,i}}\\
&=(q,v),\quad\feac v\in\Vh,
\end{align*}
In Table~\ref{tab:example3}, we give the convergence rate $T=25\dtn(=175 \text{ days})$ with different coarse-grid sizes $H$. We notice that the error significantly decreases as the size of the coarse grid is finer. Then, we have that the CEM-GMsFEM gives a good approximation of the solution for the case of the fractured medium. Figure~\ref{fig:example3} shows the numerical solutions at $t=70$ and $t=140$. 
\begin{table}[!h]
\centering
\begin{tabular}{|c|c|c|c|c|}
\hline
Number of basis & $H$ &  Number of oversampling layers $m$ & $\varepsilon_{0}$ & $\varepsilon_{1}$\\
\hline
 4 & $4h$ & $3$ &1.6111E-03 & 2.9441E-01\\
 4 & $8h$ & $4$ & 1.9532E-04 & 1.1653E-01\\
 4 & $16h$ & $5$ & 1.0160E-04 & 5.1492E-02\\
\hline
\end{tabular}
\caption{\label{tab:example3} Convergence rate of Example~\ref{example:3} with different numbers of oversampling layers ($m$) with a combination of zero Neumann and nonzero Dirichlet boundary conditions.}\label{tab:example3}
\end{table}

\begin{figure}[!h]
\centering
\subfloat[example3][$t=70$.]{\includegraphics[width=0.8\textwidth]{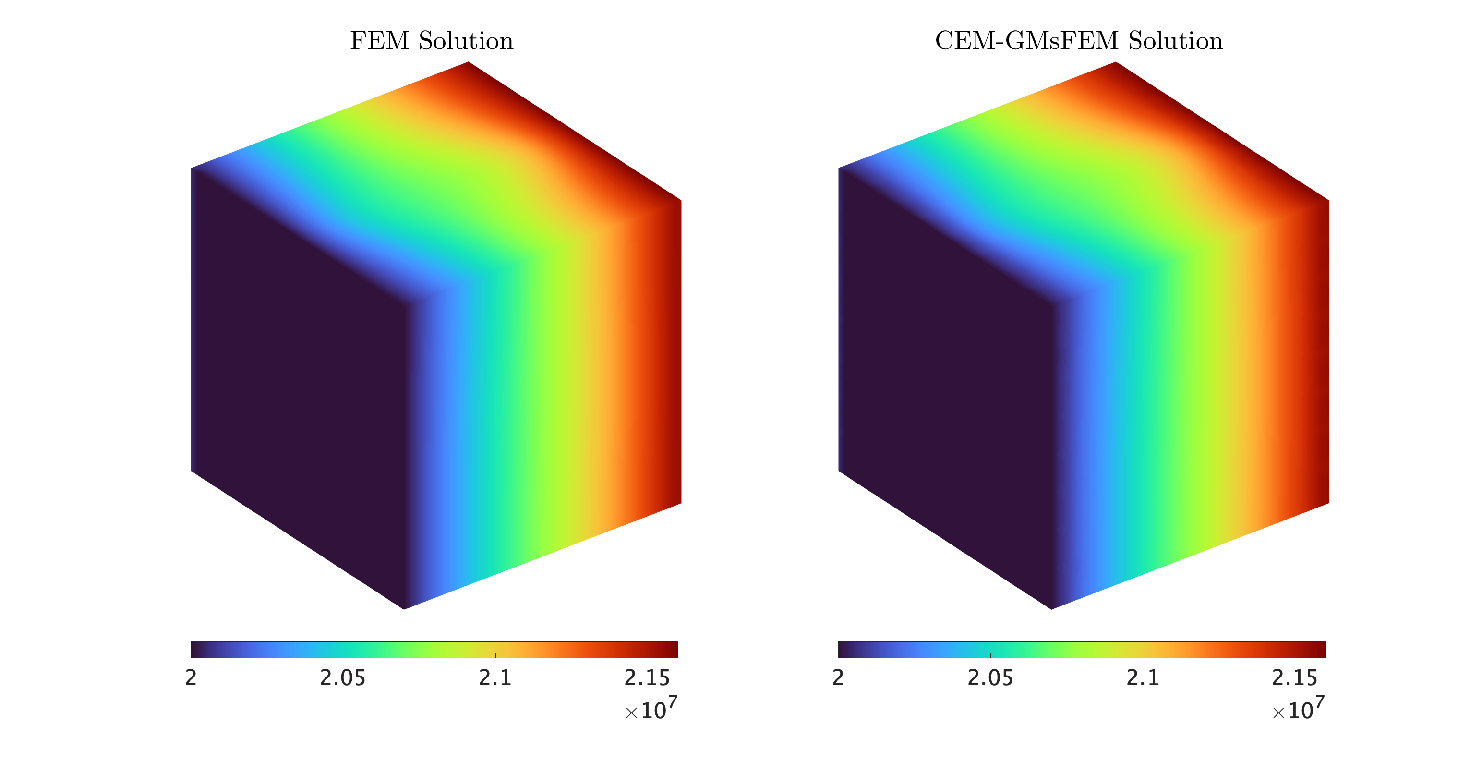}}\\
\subfloat[example2][$t=140$.]{\includegraphics[width=0.8\textwidth]{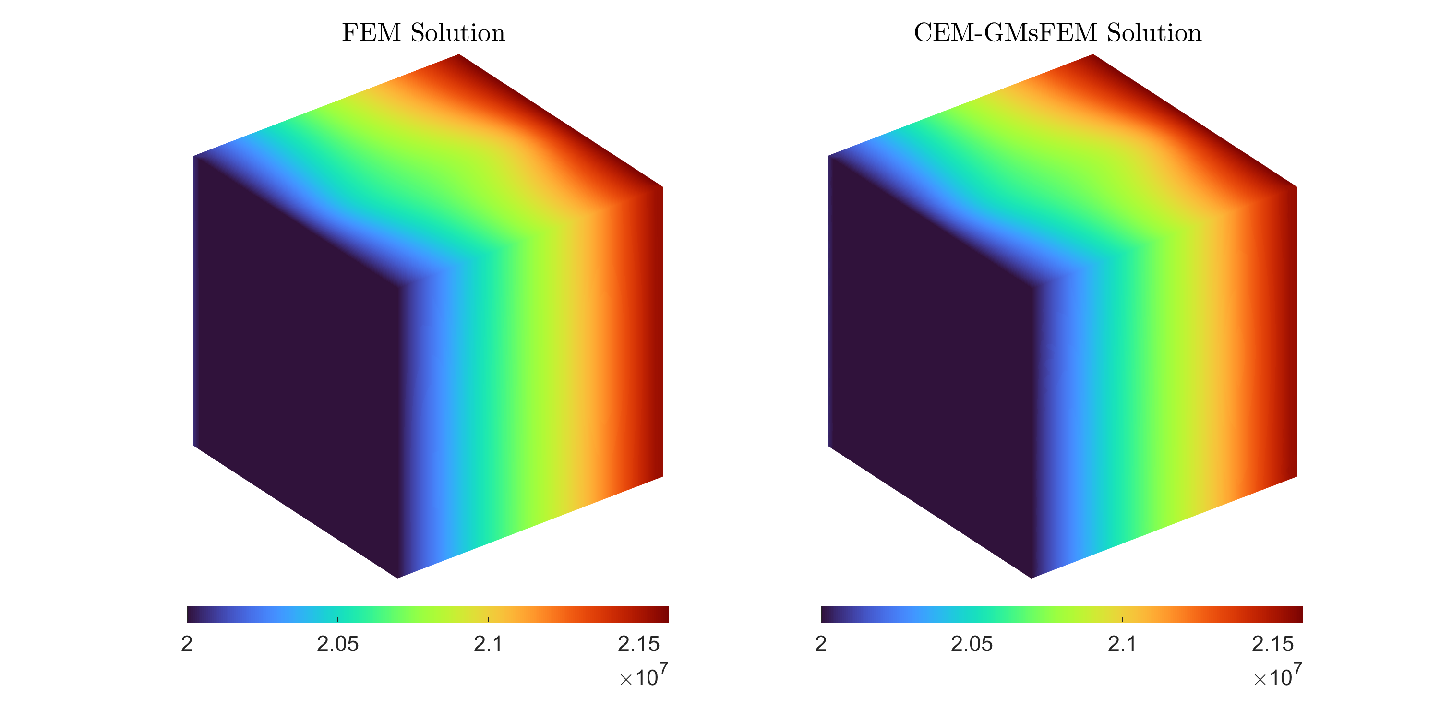}}

\caption{\label{fig:example3} Numerical solution of Example~\ref{example:3} combining a zero Neumann boundary condition and nonzero Dirichlet boundary condition. High-contrast permeability field $\k_{3}$. The fine-scale reference solution (left) and CEM-GMsFEM solution (right) with $4$ basis function and $4$ number of oversampling layers at (a) $t=70$ and (b) $t=140$.}
\end{figure}
\end{example}

\begin{example}
\label{example:4}
For the last experiment, we consider the same boundary conditions as Example~\ref{example:2}. The permeability field used is $\k_{4}$ (see Figure~\ref{fig:perm4}) and the time step $\dtn$ is $7$ days, with total time simulation will be $T=26\dtn(=186\mbox{ days})$. We set a fine grid resolution of $220\times60\times30$ (fine-scale system with a dimension of $417911$), with a size of $h=20$m, and coarse grid resolution of $10^{3}$ (coarse-scale system with a dimension $5324$). We show the pressure profiles comparison in Figure~\ref{fig:example4}. This experiment obtains an error estimate of $\varepsilon_{0}=$1.8377E-03 and $\varepsilon_{1}=$3.4547E-01, respectively.
\begin{figure}[!h]
\centering
\subfloat[example4][]{\includegraphics[width=0.8\textwidth]{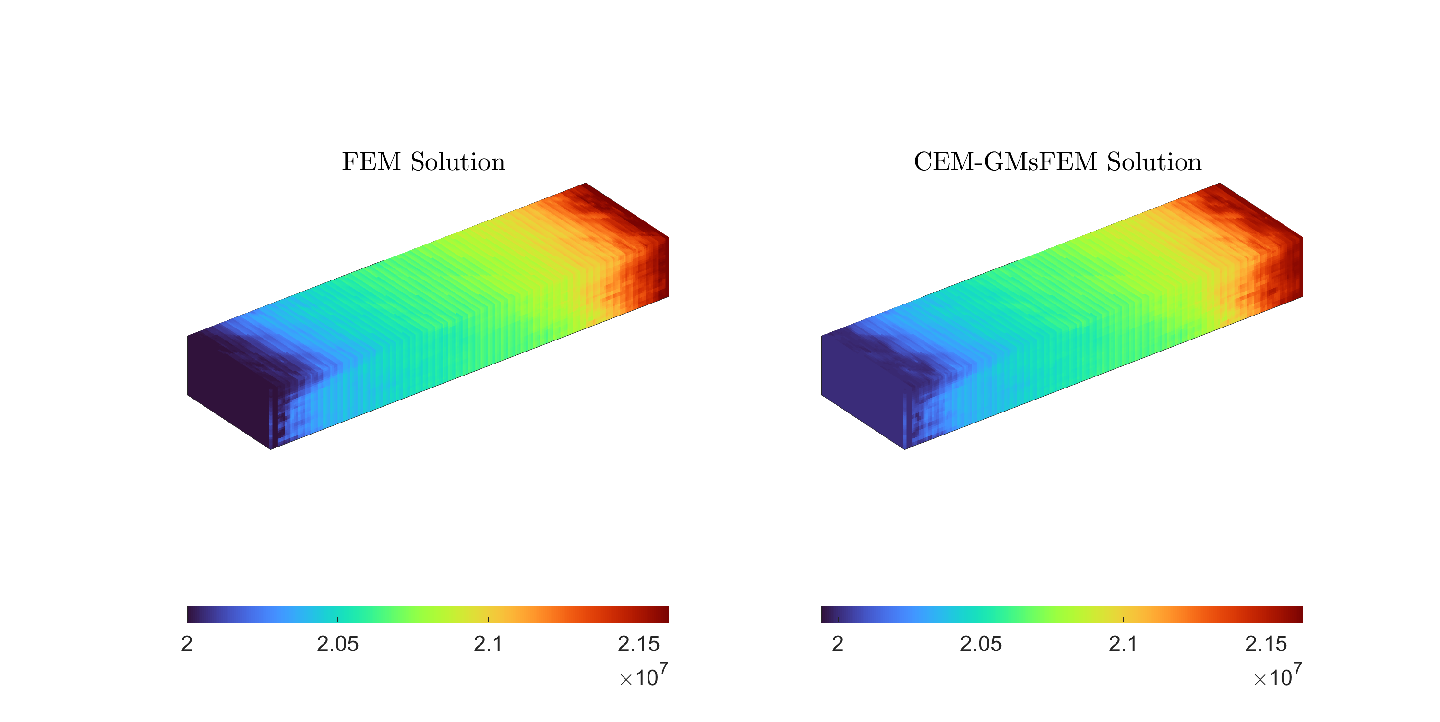}}\\
\subfloat[example4][]{\includegraphics[width=0.8\textwidth]{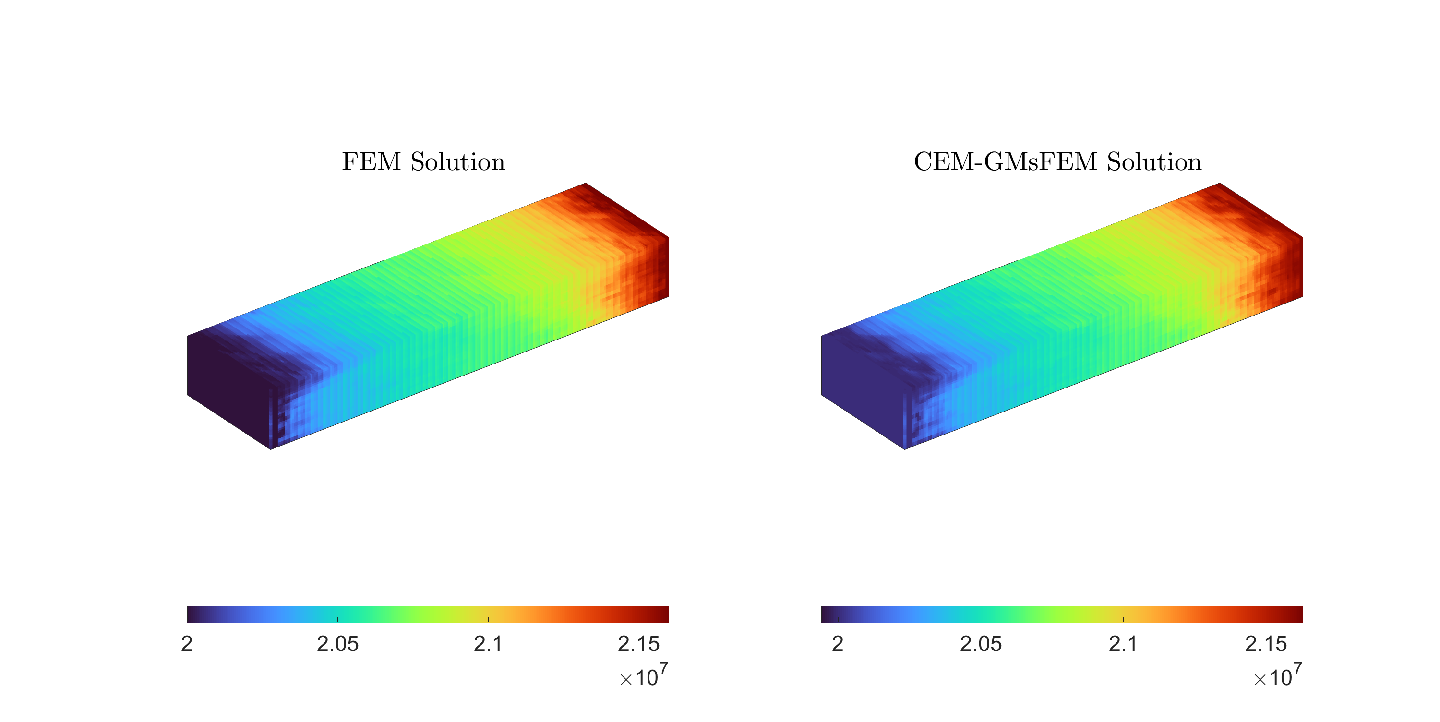}}
\caption{\label{fig:example4} Example 4. Mixed boundary conditions (a) $t=84$ and (b) $t=126$.}
\end{figure}
\end{example}
\section{Concluding remarks}
\label{sec:conclusion}
This paper studies the convergence of the numerical approximations to the highly heterogeneous nonlinear single-phase compressible flow by CEM-GMsFEM. We first build an auxiliary for the proposed method by solving spectral problems. Then, we construct a multiscale basis function by solving some constraint energy minimization problems in the oversampling local regions. So, we obtain multiscale basis functions for the pressure. This work defines the elliptic projection in the multiscale space spanned by CEM-GMsFEM basis functions for convergence analysis. Thus, we present the convergence of the semi-discrete formulation. The convergence depends on the coarse mesh size and the decay of eigenvalues of local spectral problems; an \emph{a posteriori} error estimate is derived underlining discretization. Some numerical examples have been presented to verify the feasibility of the proposed method concerning convergence and stability. We observe that the CEM-GMsFEM is shown to have a second-order convergence rate in the $L^{2}$-norm and a first-order convergence rate in the energy norm concerning the coarse grid size.

A foreseeable result in ongoing research is to boost the performance of the coarse-grid simulation, mainly where the source term is singular; one may need to further improve the accuracy of the approximation without additional refinement in the grid. We can enrich the multiscale space for such a goal by adding additional basis functions in the online stage \cite{chung2018fast}. These new multiscale basis functions are constructed on the oversampling technique and the information on local residuals. Consequently, we could present an adaptive enrichment algorithm to reduce error in some regions with large residuals.

\section*{Acknowledgement}

The research of Eric Chung is partially supported by the Hong Kong RGC General Research Fund (Projects: 14305222 and 14304021). 

\bibliographystyle{plain}
\bibliography{CEM-GMsFEM_compress_flow}
\end{document}